\newtheorem{theorem}{Theorem}
\newtheorem*{conjecture}{Goulden--Rattan conjecture}
\newtheorem{con}{Conjecture}
\newtheorem*{lemma}{Lemma}
\DeclareMathOperator{\degg}{deg}
\DeclareMathOperator{\odd}{odd}
\DeclareMathOperator{\even}{even}
\DeclareMathOperator{\rest}{rest}
\newcommand{\razy}{\circ}
\newcommand{\es}{s}
\newcommand{\ess}{s'}
\newcommand{\nast}
{
	\sigma
}
\newcommand{\lewodd}[4]
{
	\draw[color=#4] [dashed] (#1) to [bend left=#3] (#2);
}
\newcommand{\prawolewodd}[4]
{
	\prawodd{#1}{$.5*(#1)+.5*(#2)$}{#3}{#4}
	\prawodd{#2}{$.5*(#1)+.5*(#2)$}{#3}{#4}
}
\newcommand{\lewoprawoddd}[4]
{
	\prawoddd{#1}{$.5*(#1)+.5*(#2)$}{#3}{#4}
	\prawoddd{#2}{$.5*(#1)+.5*(#2)$}{#3}{#4}
}
\newcommand{\torus}[2]
{

	\centering
	\subfloat[]
	{
		\begin{tikzpicture}[scale=#1]
		\coordinate (x) at (-5.6,-0.7);
		\coordinate (y) at (-2,-2);
		\coordinate (z) at (2,-2);
		\coordinate (t) at (5.6,-0.7);
		\coordinate (p) at (-5,2);
		\coordinate (q) at (0,3);
		\coordinate (r) at (5,2);
		\coordinate (yz) at (1.2,-.85);
		\coordinate (zy) at (-.7,-3.9);
		\coordinate (gora) at (0,.2);
		\coordinate (dol) at (0,-1.1);
		\draw[color=black, ultra thick] (0,0) ellipse (8 and 4);
		\draw[color=black, ultra thick] (5,1) to [bend left=40] (-5,1);
		\draw[color=black, ultra thick] (3.5,0) to [bend right=40] (-3.5,0);
		\prosto{x}{y}{black}
		\prosto{y}{z}{black}
		\prosto{z}{t}{black}
		\lewo{yz}{z}{30}{blue}
		\lewo{zy}{y}{40}{blue}
		\lewoprawoddd{zy}{yz}{40}{blue}
		\lewo{x}{p}{60}{black}
		\lewo{p}{r}{22}{black}
		\lewo{r}{t}{60}{black}
		\wierzcholek{x}{black}{b_2}{dol}
		\wierzcholek{y}{white}{w_1}{gora}
		\wierzcholek{z}{black}{b_1}{dol}
		\wierzcholek{t}{white}{w_2}{dol}
		\end{tikzpicture}
		\label{fig:rystorus}
	}
	\hfill
	\subfloat[]
	{
		\begin{tikzpicture}[scale=#2]
			\punkty		
			\prosto{a}{b}{black}
			\prosto{b}{c}{black}
			\prosto{c}{d}{black}
			\prosto{d}{a}{black}
			\prawolewo{$(c)+(-0.07,0.07)$}{d}{90}{blue}
			\wierzcholek{a}{black}{b_2}{gora}
			\wierzcholek{c}{black}{b_1}{dol}
			\wierzcholek{b}{white}{w_2}{gora}
			\wierzcholek{d}{white}{w_1}{gora}
		\end{tikzpicture}
		\label{fig:rysnormal}
	}	
}
\newcommand{\reform}[1]
{
	\centering
	\subfloat[]
	{
		\begin{tikzpicture}[scale=#1]
			\punkty		
			\prosto{d}{a}{black}
			\lewo{a}{b}{35}{black}
			\lewo{c}{a}{20}{black}
			\prawo{c}{a}{40}{black}
			\prawo{a}{b}{35}{black}
			\wierzcholek{c}{white}{}{dol}
			\wierzcholek{a}{black}{}{gora}
			\wierzcholek{d}{white}{}{gora}
			\wierzcholek{b}{white}{}{gora}
			\draw[red,fill=black] (a) circle (0pt) node[above] at ($(-0.85,0.85)$) {\tiny \small $1$};
			\draw[red,fill=black] (a) circle (0pt) node[above] at ($(-0.55,-0.9)$) {\tiny \small $2$};
			\draw[red,fill=black] (a) circle (0pt) node[above] at ($(1.05,0.5)$) {\tiny \small $3$};
			\draw[red,fill=black] (a) circle (0pt) node[above] at ($(0.45,-0.9)$) {\tiny \small $4$};
			\draw[red,fill=black] (a) circle (0pt) node[above] at ($(0.2,0.3)$) {\tiny \small $5$};
			
			\draw[black,fill=black] (a) circle (0pt) node[above] at ($(3.3,0.8)$) {\tiny \small $\sigma_2=(1\;2\;5\;4\;3)$};
			\draw[black,fill=black] (a) circle (0pt) node[above] at ($(3.5,1.5)$) {\tiny \small $\sigma_1=(1)(2\;4)(3\;5)$};
			\draw[black,fill=black] (a) circle (0pt) node[above] at ($(3.45,0.1)$) {\tiny \small $\sigma_1 \sigma_2=(1\;2\;3\;4\;5)$};
		\end{tikzpicture}
		\label{fig:reformone}
	}
	\hfill
	\subfloat[]
	{
		\begin{tikzpicture}[scale=#1]
			\punkty		
			\prosto{a}{b}{black}
			\prosto{b}{c}{black}
			\prosto{c}{d}{black}
			\prosto{d}{a}{black}
			\prawolewo{$(c)+(-0.07,0.07)$}{d}{90}{black}
			\wierzcholek{a}{black}{}{gora}
			\wierzcholek{c}{black}{}{dol}
			\wierzcholek{b}{white}{}{gora}
			\wierzcholek{d}{white}{}{gora}
			\draw[red,fill=black] (a) circle (0pt) node[above] at ($(-0.95,0.75)$) {\tiny \small $1$};
			\draw[red,fill=black] (a) circle (0pt) node[above] at ($(-0.8,-0.75)$) {\tiny \small $2$};
			\draw[red,fill=black] (a) circle (0pt) node[above] at ($(0.45,0.7)$) {\tiny \small $3$};
			\draw[red,fill=black] (a) circle (0pt) node[above] at ($(0.05,-1.4)$) {\tiny \small $4$};
			\draw[red,fill=black] (a) circle (0pt) node[above] at ($(0.85,-1.4)$) {\tiny \small $5$};
			
			\draw[black,fill=black] (a) circle (0pt) node[above] at ($(-3.5,-1.6)$) {\tiny \small $\sigma_2=(1\;3)(2\;5\;4)$};
			\draw[black,fill=black] (a) circle (0pt) node[above] at ($(-3.5,-0.9)$) {\tiny \small $\sigma_1=(1\;4\;2)(3\;5)$};
			\draw[black,fill=black] (a) circle (0pt) node[above] at ($(-3.45,-2.3)$) {\tiny \small $\sigma_1 \sigma_2=(1\;2\;3\;4\;5)$};
		\end{tikzpicture}
		\label{fig:reformtwo}
	}	
}
\newcommand{\punkty}
{
	\coordinate (a) at (0,2);
	\coordinate (b) at (1.4,0);
	\coordinate (nb) at (1.4,3);
	\coordinate (zab) at (2,0);
	\coordinate (c) at (0,-2);
	\coordinate (d) at (-1.4,0);
	\coordinate (nd) at (-1.4,3);
	\coordinate (s1) at (-2.8,0);
	\coordinate (s2) at (0,4);
	\coordinate (s3) at (0,-4);
	\coordinate (zad) at (-2,0);
	\coordinate (gora) at (0,.3);
	\coordinate (dol) at (0,-.9);
	\coordinate (obok) at (.9,-.9);
	\coordinate (boku) at (.9,0);
	\coordinate (zero) at (0,0);
}
\newcommand{\prosto}[3]
{
	\draw[color=#3] (#1) to [bend left=0] (#2);
}
\newcommand{\lewo}[4]
{
	\draw[color=#4] (#1) to [bend left=#3] (#2);
}
\newcommand{\prawo}[4]
{
	\draw[color=#4] (#1) to [bend right=#3] (#2);
}
\newcommand{\prawolewo}[4]
{
	\prawo{#1}{$.5*(#1)+.5*(#2)$}{#3}{#4}
	\lewo{$.5*(#1)+.5*(#2)$}{#2}{#3}{#4}
}
\newcommand{\lewoprawo}[4]
{
	\lewo{#1}{$.5*(#1)+.5*(#2)$}{#3}{#4}
	\prawo{$.5*(#1)+.5*(#2)$}{#2}{#3}{#4}
}
\newcommand{\waskoszerokodd}[4]
{
	\draw[color=#4] [densely dashed] (#1) to [bend left=0] (#3);
	\draw[color=#4] [densely dashed] (#2) to [bend right=45] (#3);
}
\newcommand{\prawodd}[4]
{
	\draw[color=#4] [dashed] (#1) to [bend right=#3] (#2);
}
\newcommand{\prawoddd}[4]
{
	\draw[color=#4] [dashed] (#1) to [bend right=#3] (#2);
}
\newcommand{\lewoprawodd}[4]
{
	\lewodd{#1}{$.5*(#1)+.5*(#2)$}{#3}{#4}
	\prawodd{$.5*(#1)+.5*(#2)$}{#2}{#3}{#4}
}
\newcommand{\szerokodd}[4]
{
	\draw[color=#4] [dashed] (#1) to [bend left=45] (#3);
	\draw[color=#4] [dashed] (#2) to [bend right=45] (#3);
}
\newcommand{\prostodd}[3]
{
	\draw[color=#3] (#1) [dashed] to [bend left=0] (#2);
}
\newcommand{\wierzcholek}[4]
{
\draw[black,fill=#2] (#1) circle (5pt) node[above] at ($(#1)+(#4)$) {\tiny \large $#3$};
}
\newcommand{\wierzcholekpusty}[4]
{
\draw[black,fill=white] (#1) circle (0pt) node[above] at ($(#1)+(#4)$) {\textcolor{#2}{\tiny \small $#3$}};
}
\newcommand{\ryschange}[1]
{	
	\centering
	\subfloat[]
	{
		\centering
		\begin{tikzpicture}[scale=#1]
			\punkty		
			\prosto{zero}{s1}{black}
			\prawo{zero}{s2}{55}{black}
			\lewo{zero}{s3}{55}{black}
			\prawo{s2}{$.5*(s2)+.5*(s1)$}{55}{black}
			\prawo{s2}{$.65*(s2)+.35*(s1)$}{25}{black}
			\lewo{zero}{$.6*(a)+.3*(d)$}{50}{black}
			\prosto{a}{$.6*(a)+.3*(d)$}{black}
			\prawo{zero}{$.6*(c)+.3*(d)$}{50}{black}
			\prosto{c}{$.6*(c)+.3*(d)$}{black}
			\lewo{s3}{$.5*(s3)+.5*(s1)$}{55}{black}
			\lewo{s3}{$.65*(s3)+.35*(s1)$}{25}{black}
			\wierzcholek{s2}{white}{w_2}{gora}
			\wierzcholek{s2}{white}{\cdots}{dol}
			\wierzcholek{s3}{white}{w_3}{dol}
			\wierzcholek{s3}{white}{\cdots}{gora}
			\wierzcholek{zero}{black}{c_0\Bigg)}{obok}
			\wierzcholek{zero}{black}{\cdots}{gora}
			\wierzcholek{zero}{black}{\cdots}{dol}
			\wierzcholek{s1}{white}{w_1}{gora}
		\end{tikzpicture}
		\label{fig:ryschangea}
	}
	\hspace{70pt}
	\subfloat[]
	{
		\begin{tikzpicture}[scale=#1]
			\punkty		
			\prosto{zero}{s1}{black}
			\lewo{zero}{s3}{55}{black}
			\prosto{zero}{s2}{black}
			\prawo{zero}{$.6*(c)+.3*(d)$}{50}{black}
			\prosto{c}{$.6*(c)+.3*(d)$}{black}
			\lewo{s3}{$.5*(s3)+.5*(s1)$}{55}{black}
			\lewo{s3}{$.65*(s3)+.35*(s1)$}{25}{black}
			\wierzcholek{s2}{white}{w_2}{gora}
			\wierzcholek{s3}{white}{w_3}{dol}
			\wierzcholek{s3}{white}{\cdots}{gora}
			\wierzcholek{zero}{black}{c_0\Bigg)}{$.2*(gora)+.8*(obok)$}
			\wierzcholek{zero}{black}{\cdots}{dol}
			\wierzcholek{s1}{white}{w_1}{gora}
		\end{tikzpicture}
		\label{fig:ryschangeb}
	}
}
\newcommand{\rysall}[1]
{	
	\centering
	\subfloat[]
	{
		\begin{tikzpicture}[scale=#1]
			\punkty		
			\prosto{a}{b}{black}
			\prosto{b}{c}{black}
			\prosto{c}{d}{black}
			\prosto{d}{a}{black}
			\prawolewo{c}{d}{55}{black}
			\wierzcholek{a}{black}{b_2}{gora}
			\wierzcholek{c}{black}{b_1}{dol}
			\wierzcholek{b}{white}{w_2}{gora}
			\wierzcholek{d}{white}{w_1}{gora}
		\end{tikzpicture}
		\label{fig:rysalla}
	}
	\hfill
	\subfloat[]
	{
		\begin{tikzpicture}[scale=#1]
			\punkty		
			\prosto{a}{b}{black}
			\szerokodd{a}{c}{zab}{red}
			\prawolewodd{c}{a}{30}{red}
			\prosto{d}{a}{black}
			\prostodd{c}{a}{red}
			\wierzcholek{a}{black}{b}{gora}
			\wierzcholek{c}{white}{w_3}{dol}
			\wierzcholek{b}{white}{}{gora}
			\wierzcholek{d}{white}{}{gora}
			\kierunek{a}{$(-0.01,-0.39)$}{$(-0.08,-0.15)$}
			\kierunek{a}{$(-0.27,-0.67)$}{$(-0.12,-0.12)$}
			\kierunek{a}{$(0.5,-0.06)$}{$(0.08,-0.15)$}
		\end{tikzpicture}
		\label{fig:rysallb}
	}
	\hfill
	\subfloat[]
	{
		\begin{tikzpicture}[scale=#1]
			\punkty		
			\prosto{d}{a}{black}
			\lewo{a}{b}{35}{black}
			\lewodd{c}{a}{20}{red}
			\prawodd{c}{a}{40}{red}
			\prawo{a}{b}{35}{black}
			\wierzcholek{c}{white}{w_3}{dol}
			\wierzcholek{a}{black}{b}{gora}
			\wierzcholek{d}{white}{}{gora}
			\wierzcholek{b}{white}{}{gora}
			\kierunek{a}{$(-0.22,-0.67)$}{$(-0.15,-0.08)$}
			\kierunek{a}{$(+0.3,-0.43)$}{$(-0.08,-0.15)$}
		\end{tikzpicture}
		\label{fig:rysallc}
	}
	\hfill
	\subfloat[]
	{
		\begin{tikzpicture}[scale=#1]
			\punkty		
			\prosto{a}{b}{black}
			\waskoszerokodd{a}{c}{$.6*(b)+.4*(c)$}{blue}
			\prostodd{c}{a}{red}
			\prosto{d}{a}{black}
			\prawolewodd{c}{a}{30}{red}
			\wierzcholek{a}{black}{b}{gora}
			\wierzcholek{c}{white}{w_3}{dol}
			\wierzcholek{b}{white}{}{gora}
			\wierzcholek{d}{white}{}{gora}
			\kierunek{a}{$(-0.01,-0.39)$}{$(-0.08,-0.15)$}
			\kierunek{a}{$(-0.27,-0.67)$}{$(-0.12,-0.12)$}
			\kierunek{a}{$(0.18,-0.58)$}{$(0.15,-0.08)$}
		\end{tikzpicture}
		\label{fig:rysalld}
	}
}
\newcommand{\rysodd}[1]
{	
	\centering
	\subfloat[]
	{
		\begin{tikzpicture}[scale=#1]
			\punkty		
			\prosto{a}{b}{black}
			\prostodd{b}{c}{red}
			\prostodd{d}{c}{red}
			\prosto{d}{a}{black}
			\prawolewodd{c}{d}{55}{red}
			\wierzcholek{a}{black}{b_2}{gora}
			\wierzcholek{c}{black}{b_1}{dol}
			\wierzcholek{b}{white}{w_2}{gora}
			\wierzcholek{d}{white}{w_1}{gora}
			\kierunek{b}{$(-0.28,-0.41)$}{$(0.15, -0.08)$}
			\kierunek{d}{$(0.29,-0.4)$}{$(0.12,0.12)$}
			\kierunek{d}{$(0.02,-0.49)$}{$(0.15,0.08)$}
		\end{tikzpicture}
		\label{fig:rysodda}
	}
	\hfill
	\subfloat[]
	{
		\begin{tikzpicture}[scale=#1]
			\punkty		
			\prosto{a}{b}{black}
			\szerokodd{$.5*(a)+.5*(b)$}{c}{$(zab)+(0.0,0.25)$}{red}
			\prawolewodd{c}{$.5*(a)+.5*(d)$}{40}{red}
			\prosto{d}{a}{black}
			\prostodd{c}{$.5*(a)+.5*(d)$}{red}
			\wierzcholek{a}{black}{b_2}{gora}
			\wierzcholek{c}{black}{b_1}{dol}
			\wierzcholek{b}{white}{w_2}{gora}
			\wierzcholek{d}{white}{w_1}{gora}
			\kierunek{$.5*(a)+.5*(b)$}{$(0.28,0.04)$}{$(-0.08,0.15)$}
			\kierunek{$.5*(a)+.5*(d)$}{$(0.09,-0.36)$}{$(0.12,0.12)$}
			\kierunek{$.5*(a)+.5*(d)$}{$(-0.13,-0.49)$}{$(0.15,0.08)$}
		\end{tikzpicture}
		\label{fig:rysoddb}
	}
	\hfill
	\subfloat[]
	{
		\begin{tikzpicture}[scale=#1]
			\punkty		
			\prosto{a}{b}{black}
			\szerokodd{a}{c}{zab}{red}
			\prawolewodd{c}{a}{30}{red}
			\prosto{d}{a}{black}
			\prostodd{c}{a}{red}
			\wierzcholek{a}{black}{b_2}{gora}
			\wierzcholek{c}{black}{b_1}{dol}
			\wierzcholek{b}{white}{w_2}{gora}
			\wierzcholek{d}{white}{w_1}{gora}
			\kierunek{a}{$(-0.01,-0.39)$}{$(-0.08,-0.15)$}
			\kierunek{a}{$(-0.27,-0.67)$}{$(-0.12,-0.12)$}
			\kierunek{a}{$(0.5,-0.06)$}{$(0.08,-0.15)$}
		\end{tikzpicture}
		\label{fig:rysoddc}
	}
	\hfill
	\subfloat[]
	{
		\begin{tikzpicture}[scale=#1]
			\punkty		
			\prosto{a}{b}{black}
			\szerokodd{a}{c}{zab}{red}
			\prawolewodd{c}{a}{30}{red}
			\prosto{d}{a}{black}
			\prostodd{c}{a}{red}
			\wierzcholek{a}{black}{b}{gora}
			\wierzcholek{c}{white}{w_3}{dol}
			\wierzcholek{b}{white}{w_2}{gora}
			\wierzcholek{d}{white}{w_1}{gora}
			\kierunek{a}{$(-0.01,-0.39)$}{$(-0.08,-0.15)$}
			\kierunek{a}{$(-0.27,-0.67)$}{$(-0.12,-0.12)$}
			\kierunek{a}{$(0.5,-0.06)$}{$(0.08,-0.15)$}
		\end{tikzpicture}
		\label{fig:rysoddd}
	}
}
\newcommand{\ryseven}[1]
{	
	\centering
	\subfloat[]
	{
		\begin{tikzpicture}[scale=#1]
			\punkty		
			\prostodd{a}{b}{red}
			\prosto{b}{c}{black}
			\lewo{c}{d}{30}{black}
			\prawodd{a}{$0.5*(a)+0.2*(c)$}{15}{red}
			\szerokodd{$0.5*(a)+0.2*(c)$}{$(d)+(0.08,0.00)$}{$(d)+(1.0,-0.8)$}{red}
			\prawo{c}{d}{30}{black}
			\wierzcholek{a}{black}{b_2}{gora}
			\wierzcholek{c}{black}{b_1}{dol}
			\wierzcholek{b}{white}{w_2}{gora}
			\wierzcholek{d}{white}{w_1}{gora}
			\kierunek{d}{$(0.25,-0.45)$}{$(0.17,0)$}
			\kierunek{b}{$(-0.23,0.32)$}{$(-0.12,-0.12)$}
		\end{tikzpicture}
		\label{fig:rysevena}
	}
	\hfill
	\subfloat[]
	{
		\begin{tikzpicture}[scale=#1]
			\punkty		
			\prostodd{a}{$.5*(b)+.5*(c)$}{red}
			\prosto{b}{c}{black}
			\lewo{c}{d}{30}{black}
			\szerokodd{$.5*(c)+.5*(d)+(0.4,-0.0)$}{a}{$(d)+(0.65,-0.85)$}{red}
			\prawo{c}{d}{30}{black}
			\wierzcholek{a}{black}{b_2}{gora}
			\wierzcholek{c}{black}{b_1}{dol}
			\wierzcholek{b}{white}{w_2}{gora}
			\wierzcholek{d}{white}{w_1}{gora}
			\kierunek{$0.5*(c)+0.2*(d)$}{$(-0.29,-0.01)$}{$(0,-0.17)$}
			\kierunek{$0.5*(b)+0.2*(c)$}{$(-0.08,-0.24)$}{$(-0.12,-0.12)$}
		\end{tikzpicture}
		\label{fig:rysevenb}
	}
	\hfill
	\subfloat[]
	{
		\begin{tikzpicture}[scale=#1]
			\punkty		
			\prosto{b}{c}{black}
			\lewo{c}{d}{35}{black}
			\lewodd{a}{c}{20}{red}
			\prawodd{a}{c}{40}{red}
			\prawo{c}{d}{35}{black}
			\wierzcholek{a}{black}{b_2}{gora}
			\wierzcholek{c}{black}{b_1}{dol}
			\wierzcholek{b}{white}{w_2}{gora}
			\wierzcholek{d}{white}{w_1}{gora}
			\kierunek{c}{$(0.22,0.67)$}{$(0.15,0.08)$}
			\kierunek{c}{$(-0.3,0.43)$}{$(0.08,0.15)$}
		\end{tikzpicture}
		\label{fig:rysevenc}
	}
	\hfill
	\subfloat[]
	{
		\begin{tikzpicture}[scale=#1]
			\punkty		
			\prosto{b}{c}{black}
			\lewo{c}{d}{35}{black}
			\lewodd{a}{c}{20}{red}
			\prawodd{a}{c}{40}{red}
			\prawo{c}{d}{35}{black}
			\wierzcholek{a}{white}{w_3}{gora}
			\wierzcholek{c}{black}{b}{dol}
			\wierzcholek{b}{white}{w_2}{gora}
			\wierzcholek{d}{white}{w_1}{gora}
			\kierunek{c}{$(0.22,0.67)$}{$(0.15,0.08)$}
			\kierunek{c}{$(-0.3,0.43)$}{$(0.08,0.15)$}
		\end{tikzpicture}
		\label{fig:rysevend}
	}
}
\newcommand{\rysrest}[1]
{	
	\centering
	\subfloat[]
	{
		\begin{tikzpicture}[scale=#1]
			\punkty		
			\prosto{a}{b}{black}
			\prostodd{b}{c}{blue}
			\prostodd{d}{c}{red}
			\prosto{d}{a}{black}
			\prawolewodd{c}{d}{55}{red}
			\wierzcholek{a}{black}{b_2}{gora}
			\wierzcholek{c}{black}{b_1}{dol}
			\wierzcholek{b}{white}{w_2}{gora}
			\wierzcholek{d}{white}{w_1}{gora}
			\kierunek{b}{$(-0.29,-0.42)$}{$(-0.08, 0.15)$}
			\kierunek{d}{$(0.29,-0.4)$}{$(0.12,0.12)$}
			\kierunek{d}{$(0.02,-0.49)$}{$(0.15,0.08)$}
		\end{tikzpicture}
		\label{fig:rysresta}
	}
	\hfill
	\subfloat[]
	{
		\begin{tikzpicture}[scale=#1]
			\punkty		
			\prosto{a}{b}{black}
			\waskoszerokodd{$.5*(a)+.5*(b)$}{c}{$.6*(b)+.4*(c)$}{blue}
			\prostodd{c}{$.5*(a)+.5*(d)$}{red}
			\prosto{d}{a}{black}
			\prawolewodd{c}{$.5*(a)+.5*(d)$}{40}{red}
			\wierzcholek{a}{black}{b_2}{gora}
			\wierzcholek{c}{black}{b_1}{dol}
			\wierzcholek{b}{white}{w_2}{gora}
			\wierzcholek{d}{white}{w_1}{gora}\kierunek{$.5*(a)+.5*(d)$}{$(0.09,-0.36)$}{$(0.12,0.12)$}
			\kierunek{$.5*(a)+.5*(d)$}{$(-0.13,-0.49)$}{$(0.15,0.08)$}
			\kierunek{$.5*(a)+.5*(b)$}{$(0.02,-0.42)$}{$(-0.08, 0.15)$}
		\end{tikzpicture}
		\label{fig:rysrestb}
	}
	\hfill
	\subfloat[]
	{
		\begin{tikzpicture}[scale=#1]
			\punkty		
			\prosto{a}{b}{black}
			\waskoszerokodd{a}{c}{$.6*(b)+.4*(c)$}{blue}
			\prostodd{c}{a}{red}
			\prosto{d}{a}{black}
			\prawolewodd{c}{a}{30}{red}
			\wierzcholek{a}{black}{b_2}{gora}
			\wierzcholek{c}{black}{b_1}{dol}
			\wierzcholek{b}{white}{w_2}{gora}
			\wierzcholek{d}{white}{w_1}{gora}
			\kierunek{a}{$(-0.01,-0.39)$}{$(-0.08,-0.15)$}
			\kierunek{a}{$(-0.27,-0.67)$}{$(-0.12,-0.12)$}
			\kierunek{a}{$(0.18,-0.58)$}{$(0.15,-0.08)$}
		\end{tikzpicture}
		\label{fig:rysrestc}
	}
	\hfill
	\subfloat[]
	{
		\begin{tikzpicture}[scale=#1]
			\punkty		
			\prosto{a}{b}{black}
			\waskoszerokodd{a}{c}{$.6*(b)+.4*(c)$}{blue}
			\prostodd{c}{a}{red}
			\prosto{d}{a}{black}
			\prawolewodd{c}{a}{30}{red}
			\wierzcholek{a}{black}{b}{gora}
			\wierzcholek{c}{white}{w_3}{dol}
			\wierzcholek{b}{white}{w_2}{gora}
			\wierzcholek{d}{white}{w_1}{gora}
			\kierunek{a}{$(-0.01,-0.39)$}{$(-0.08,-0.15)$}
			\kierunek{a}{$(-0.27,-0.67)$}{$(-0.12,-0.12)$}
			\kierunek{a}{$(0.18,-0.58)$}{$(0.15,-0.08)$}
		\end{tikzpicture}
		\label{fig:rysrestd}
	}
}
\newcommand{\kierunek}[3]
{
	 \draw [-{Triangle[angle=60:0.7mm]}] ($(#1)+(#2)$) -- ($(#1)+(#2)+(#3)$);
}
\newcommand{\sliding}[1]
{
	\centering
	\subfloat[]
	{
		\begin{tikzpicture}[scale=#1]
			\punkty		
			\prostodd{d}{a}{red}
			\lewoprawodd{d}{a}{50}{red}
			\lewodd{a}{d}{75}{red}
			\lewoprawo{b}{d}{35}{black}
			\prawo{d}{$(0,0)$}{35}{black}
			\prosto{$(0,0)$}{$(1,0.6)$}{black}
			\lewo{$(1,0.6)$}{b}{90}{black}
			\lewo{b}{d}{65}{black}
			\wierzcholek{a}{black}{}{gora}
			\wierzcholek{b}{black}{}{gora}
			\wierzcholek{d}{black}{}{$(dol)+(-0.2,0.0)$}
			\kierunek{d}{$(0.04,0.48)$}{$(0.17,0)$}
			\kierunek{d}{$(0.29,0.41)$}{$(0.17,0)$}
			\kierunek{d}{$(0.5,-0)$}{$(0.15,-0.08)$}
		\end{tikzpicture}
		\label{fig:rysprzed}
	}
\hfill
	\subfloat[]
	{
		\begin{tikzpicture}[scale=#1]
			\punkty		
			\prostodd{$(d)+(0.7,0.23)$}{a}{red}
			\lewoprawodd{$(d)+(0.7,0.23)$}{a}{40}{red}
			\lewodd{a}{$(d)+(0.7,-0.23)$}{45}{red}
			\lewoprawo{b}{d}{35}{black}
			\prawo{d}{$(0,0)$}{35}{black}
			\prosto{$(0,0)$}{$(1,0.6)$}{black}
			\lewo{$(1,0.6)$}{b}{90}{black}
			\lewo{b}{d}{65}{black}
			\wierzcholek{a}{black}{}{gora}
			\wierzcholek{b}{black}{}{gora}
			\wierzcholek{d}{black}{}{$(dol)+(-0.2,0.0)$}
			\kierunek{$(d)+(0.7,0.23)$}{$(-0.04,0.28)$}{$(0.12,0.12)$}
			\kierunek{$(d)+(0.7,0.23)$}{$(0.11,0.26)$}{$(0.17,0)$}
			\kierunek{$(d)+(0.7,-0.23)$}{$(0.27,0.17)$}{$(0.17,0)$}
		\end{tikzpicture}
		\label{fig:rysw}
	}
\hfill
	\subfloat[]
	{
		\begin{tikzpicture}[scale=#1]
			\punkty		
			\prostodd{b}{a}{red}
			\lewoprawodd{b}{a}{50}{red}
			\lewodd{a}{b}{100}{red}
			\lewoprawo{b}{d}{35}{black}
			\prawo{d}{$(0,0)$}{35}{black}
			\prosto{$(0,0)$}{$(1,0.6)$}{black}
			\lewo{$(1,0.6)$}{b}{90}{black}
			\lewo{b}{d}{65}{black}
			\wierzcholek{a}{black}{}{gora}
			\wierzcholek{b}{black}{}{gora}
			\wierzcholek{d}{black}{}{$(dol)+(-0.2,0.0)$}
			\wierzcholek{d}{black}{}{$(dol)+(-0.2,0.0)$}
			\kierunek{b}{$(-0.45,0.2)$}{$(-0.12,-0.12)$}
			\kierunek{b}{$(-0.29,0.41)$}{$(-0.12,-0.12)$}
			\kierunek{b}{$(0.24,0.34)$}{$(-0.12,0.12)$}
		\end{tikzpicture}
		\label{fig:ryspo}
	}
%	\hfill
%	\subfloat[]
%	{
%		\begin{tikzpicture}[scale=#1]
%			\punkty		
%			\lewoprawo{b}{d}{35}{black}
%			\prawo{d}{$(0,0)$}{35}{black}
%			\prosto{$(0,0)$}{$(1,0.6)$}{black}
%			\lewo{$(1,0.6)$}{b}{90}{black}
%			\lewo{b}{d}{65}{black}
%			\wierzcholek{a}{black}{}{gora}
%			\wierzcholek{b}{black}{}{gora}
%			\wierzcholek{d}{black}{}{gora}
%			\draw[red,fill=black] (d) circle (0pt) node[above] at ($(d)+(-0.4,-0.3)$) {\tiny \small $1$};
%			\draw[red,fill=black] (d) circle (0pt) node[above] at ($(d)+(0.5,-0.3)$) {\tiny \small $5$};
%			\draw[red,fill=black] (d) circle (0pt) node[above] at ($(d)+(0.75,-0.73)$) {\tiny \small $3$};
%			\draw[red,fill=black] (b) circle (0pt) node[above] at ($(b)+(0.4,-0.3)$) {\tiny \small $6$};
%			\draw[red,fill=black] (b) circle (0pt) node[above] at ($(b)+(-0.35,-0.1)$) {\tiny \small $2$};
%			\draw[red,fill=black] (b) circle (0pt) node[above] at ($(b)+(-0.75,-0.72)$) {\tiny \small $4$};
%			\draw[red,fill=black] (a) circle (0pt) node[above] at ($(a)+(-0.50,-0.45)$) {\tiny \small $1'$};
%		\end{tikzpicture}
%		\label{fig:rysnum}
%	}
}
\newcommand{\slidone}[1]
{
	\centering
	\subfloat[]
	{
		\begin{tikzpicture}[scale=#1]
			\punkty		
			
			\prostodd{d}{a}{red}
			\wierzcholekpusty{$.5*(a)+.5*(d)$}{black}{4}{$.3*(gora)$}
			
			%\lewoprawodd{d}{a}{50}{red}
			%\wierzcholekpusty{$.1*(a)+.9*(d)$}{black}{3}{$1.7*(gora)$}
			
			%\lewodd{$.4*(a)+.3*(b)+.3*(d)$}{d}{10}{red}
			%\lewodd{a}{$.4*(a)+.3*(b)+.3*(d)$}{45}{red}
			%\wierzcholekpusty{$.1*(a)+.6*(b)+.3*(d)$}{black}{1}{$1.7*(gora)$}
			
			\wierzcholekpusty{$.5*(b)+.5*(nb)$}{black}{1}{$.8*(boku)$}
			\wierzcholekpusty{$.5*(d)+.5*(nd)$}{black}{2}{$-.8*(boku)$}
			\wierzcholekpusty{$.5*(d)+.5*(b)$}{black}{3}{$1.05*(dol)$}
			
			\lewo{b}{d}{30}{black}
			\lewo{d}{nd}{30}{black}
			\lewo{nb}{b}{30}{black}
			\wierzcholek{a}{black}{}{gora}
			\wierzcholek{b}{black}{c'}{$(dol)+(0.2,-0.05)$}
			\wierzcholek{nb}{black}{}{gora}
			\wierzcholek{nd}{black}{}{gora}
			\wierzcholek{d}{black}{c}{$(dol)+(-0.2,-0.05)$}
			%\kierunek{d}{$(0.04,0.48)$}{$(0.12,-0.12)$}
			\kierunek{d}{$(0.29,0.41)$}{$(0.12,-0.12)$}
			%\kierunek{d}{$(0.42,0.16)$}{$(0.12,-0.12)$}
		\end{tikzpicture}
		\label{fig:onebefore}
	}
	\hfill
	\subfloat[]
	{
			\begin{tikzpicture}[scale=#1]
			\punkty		
			
			\prostodd{$.5*(b)+.5*(d)-(0,.4)$}{a}{red}
			\wierzcholekpusty{$.4*(a)+.27*(d)+.13*(b)$}{black}{4}{$-0.8*(gora)$}
			
			%\lewoprawodd{$.5*(b)+.5*(d)-(0,.4)$}{a}{75}{red}
			%\wierzcholekpusty{$.1*(a)+.25*(b)+.65*(d)$}{black}{3}{$-0.5*(gora)$}
			
			%\lewodd{$.6*(a)+.8*(b)+.2*(d)$}{$.5*(b)+.5*(d)-(0,.4)$}{15}{red}
			%\lewodd{a}{$.6*(a)+.8*(b)+.2*(d)$}{60}{red}
			%\wierzcholekpusty{$.1*(a)+.8*(b)+.1*(d)$}{black}{1}{$1.5*(gora)$}
			
			\wierzcholekpusty{$.5*(b)+.5*(nb)$}{black}{1}{$.8*(boku)$}
			\wierzcholekpusty{$.5*(d)+.5*(nd)$}{black}{2}{$-.8*(boku)$}
			\wierzcholekpusty{$.5*(d)+.5*(b)$}{black}{3}{$1.05*(dol)$}
			
			\lewo{b}{d}{30}{black}
			\lewo{d}{nd}{30}{black}
			\lewo{nb}{b}{30}{black}
			\wierzcholek{a}{black}{}{gora}
			\wierzcholek{b}{black}{c'}{$(dol)+(0.2,-0.05)$}
			\wierzcholek{nb}{black}{}{gora}
			\wierzcholek{nd}{black}{}{gora}
			\wierzcholek{d}{black}{\es(c)}{$(dol)+(-0.2,-0.2)$}
			%\kierunek{$.5*(d)+.5*(b)-(0,.4)$}{$(-0.31,0.37)$}{$(0.17,0)$}
			\kierunek{$.5*(d)+.5*(b)-(0,.4)$}{$(0.01,0.49)$}{$(0.17,0)$}
			%\kierunek{$.5*(d)+.5*(b)-(0,.4)$}{$(0.3,0.34)$}{$(0.17,0)$}
		\end{tikzpicture}
		\label{fig:oneduring}
	}
	\hfill
	\subfloat[]
	{
			\begin{tikzpicture}[scale=#1]
			\punkty			
			
			\prostodd{b}{a}{red}
			\wierzcholekpusty{$.4*(a)+.4*(b)+.1*(d)$}{black}{4}{$-0.8*(gora)$}
			
			%\lewoprawodd{b}{a}{50}{red}
			%\wierzcholekpusty{$.1*(a)+.25*(d)+.65*(b)$}{black}{3}{$-0.5*(gora)$}
			
			%\lewodd{$.45*(nb)+.45*(b)+.1*(a)$}{b}{10}{red}
			%\lewodd{a}{$.45*(nb)+.45*(b)+.1*(a)$}{60}{red}
			%\wierzcholekpusty{$.1*(a)+.8*(b)+.1*(d)$}{black}{1}{$6.0*(gora)$}
			
			\wierzcholekpusty{$.5*(b)+.5*(nb)$}{black}{1}{$.8*(boku)$}
			\wierzcholekpusty{$.5*(d)+.5*(nd)$}{black}{2}{$-.8*(boku)$}
			\wierzcholekpusty{$.5*(d)+.5*(b)$}{black}{3}{$1.05*(dol)$}
			
			\lewo{b}{d}{30}{black}
			\lewo{d}{nd}{30}{black}
			\lewo{nb}{b}{30}{black}
			\wierzcholek{a}{black}{}{gora}
			\wierzcholek{b}{black}{\ess(c')}{$(dol)+(0.2,-0.2)$}
			\wierzcholek{nb}{black}{}{gora}
			\wierzcholek{nd}{black}{}{gora}
			\wierzcholek{d}{black}{\es(c)}{$(dol)+(-0.2,-0.2)$}
			%\kierunek{b}{$(-0.45,0.2)$}{$(-0.12,-0.12)$}
			\kierunek{b}{$(-0.29,0.41)$}{$(-0.12,-0.12)$}
			%\kierunek{b}{$(0.01,0.62)$}{$(-0.12,-0.12)$}
		\end{tikzpicture}
		\label{fig:oneafter}
	}
}
\newcommand{\slidthree}[1]
{
	\centering
	\subfloat[]
	{
		\begin{tikzpicture}[scale=#1]
			\punkty		
			
			\prostodd{d}{a}{red}
			\wierzcholekpusty{$.6*(a)+.4*(d)$}{black}{5}{$.3*(gora)$}
			
			\lewoprawodd{d}{a}{50}{red}
			\wierzcholekpusty{$.1*(a)+.9*(d)$}{black}{6}{$1.7*(gora)$}
			
			\lewodd{$.4*(a)+.3*(b)+.3*(d)$}{d}{10}{red}
			\lewodd{a}{$.4*(a)+.3*(b)+.3*(d)$}{45}{red}
			\wierzcholekpusty{$.1*(a)+.6*(b)+.3*(d)$}{black}{4}{$1.7*(gora)$}
			
			\wierzcholekpusty{$.5*(b)+.5*(nb)$}{black}{1}{$.8*(boku)$}
			\wierzcholekpusty{$.5*(d)+.5*(nd)$}{black}{2}{$-.8*(boku)$}
			\wierzcholekpusty{$.5*(d)+.5*(b)$}{black}{3}{$1.05*(dol)$}
			
			\lewo{b}{d}{30}{black}
			\lewo{d}{nd}{30}{black}
			\lewo{nb}{b}{30}{black}
			\wierzcholek{a}{black}{}{gora}
			\wierzcholek{b}{black}{c'}{$(dol)+(0.2,-0.05)$}
			\wierzcholek{nb}{black}{}{gora}
			\wierzcholek{nd}{black}{}{gora}
			\wierzcholek{d}{black}{c}{$(dol)+(-0.2,-0.05)$}
			\kierunek{d}{$(0.04,0.48)$}{$(0.12,-0.12)$}
			\kierunek{d}{$(0.29,0.41)$}{$(0.12,-0.12)$}
			\kierunek{d}{$(0.42,0.16)$}{$(0.12,-0.12)$}
		\end{tikzpicture}
		\label{fig:threebefore}
	}
	\hfill
	\subfloat[]
	{
		\begin{tikzpicture}[scale=#1]
			\punkty		
			
			\prostodd{$.5*(b)+.5*(d)-(0,.4)$}{a}{red}
			\wierzcholekpusty{$.6*(a)+.27*(d)+.13*(b)$}{black}{5}{$-0.8*(gora)$}
			
			\lewoprawodd{$.5*(b)+.5*(d)-(0,.4)$}{a}{75}{red}
			\wierzcholekpusty{$.1*(a)+.25*(b)+.65*(d)$}{black}{6}{$-0.5*(gora)$}
			
			\lewodd{$.6*(a)+.8*(b)+.2*(d)$}{$.5*(b)+.5*(d)-(0,.4)$}{15}{red}
			\lewodd{a}{$.6*(a)+.8*(b)+.2*(d)$}{60}{red}
			\wierzcholekpusty{$.1*(a)+.8*(b)+.1*(d)$}{black}{4}{$1.5*(gora)$}
			
			\wierzcholekpusty{$.5*(b)+.5*(nb)$}{black}{1}{$.8*(boku)$}
			\wierzcholekpusty{$.5*(d)+.5*(nd)$}{black}{2}{$-.8*(boku)$}
			\wierzcholekpusty{$.5*(d)+.5*(b)$}{black}{3}{$1.05*(dol)$}
			
			\lewo{b}{d}{30}{black}
			\lewo{d}{nd}{30}{black}
			\lewo{nb}{b}{30}{black}
			\wierzcholek{a}{black}{}{gora}
			\wierzcholek{b}{black}{c'}{$(dol)+(0.2,-0.05)$}
			\wierzcholek{nb}{black}{}{gora}
			\wierzcholek{nd}{black}{}{gora}
			\wierzcholek{d}{black}{\es_3(c)}{$(dol)+(-0.2,-0.2)$}
			\kierunek{$.5*(d)+.5*(b)-(0,.4)$}{$(-0.31,0.37)$}{$(0.17,0)$}
			\kierunek{$.5*(d)+.5*(b)-(0,.4)$}{$(0.01,0.49)$}{$(0.17,0)$}
			\kierunek{$.5*(d)+.5*(b)-(0,.4)$}{$(0.3,0.34)$}{$(0.17,0)$}
		\end{tikzpicture}
		\label{fig:threeduring}
	}
	\hfill
	\subfloat[]
	{
		\begin{tikzpicture}[scale=#1]
			\punkty			
			
			\prostodd{b}{a}{red}
			\wierzcholekpusty{$.6*(a)+.27*(b)+.13*(d)$}{black}{5}{$-0.8*(gora)$}
			
			\lewoprawodd{b}{a}{50}{red}
			\wierzcholekpusty{$.1*(a)+.25*(d)+.65*(b)$}{black}{6}{$-0.5*(gora)$}
			
			\lewodd{$.45*(nb)+.45*(b)+.1*(a)$}{b}{10}{red}
			\lewodd{a}{$.45*(nb)+.45*(b)+.1*(a)$}{60}{red}
			\wierzcholekpusty{$.1*(a)+.8*(b)+.1*(d)$}{black}{4}{$6.0*(gora)$}
			
			\wierzcholekpusty{$.5*(b)+.5*(nb)$}{black}{1}{$.8*(boku)$}
			\wierzcholekpusty{$.5*(d)+.5*(nd)$}{black}{2}{$-.8*(boku)$}
			\wierzcholekpusty{$.5*(d)+.5*(b)$}{black}{3}{$1.05*(dol)$}
			
			\lewo{b}{d}{30}{black}
			\lewo{d}{nd}{30}{black}
			\lewo{nb}{b}{30}{black}
			\wierzcholek{a}{black}{}{gora}
			\wierzcholek{b}{black}{\ess_3(c')}{$(dol)+(0.2,-0.2)$}
			\wierzcholek{nb}{black}{}{gora}
			\wierzcholek{nd}{black}{}{gora}
			\wierzcholek{d}{black}{\es_3(c)}{$(dol)+(-0.2,-0.2)$}
			\wierzcholek{d}{black}{}{$(dol)+(-0.2,0.0)$}
			\kierunek{b}{$(-0.45,0.2)$}{$(-0.12,-0.12)$}
			\kierunek{b}{$(-0.29,0.41)$}{$(-0.12,-0.12)$}
			\kierunek{b}{$(0.01,0.62)$}{$(-0.12,-0.12)$}
		\end{tikzpicture}
		\label{fig:threeafter}
	}
}
\newcommand{\multicycles}[1]
{
	\subfloat[]
	{
		\begin{tikzpicture}[scale=#1]
			\punkty		
			
			\prostodd{d}{a}{black}
			\wierzcholekpusty{$.6*(a)+.4*(d)$}{black}{5}{$.3*(gora)$}
			
			\lewoprawodd{d}{a}{50}{black}
			\wierzcholekpusty{$.1*(a)+.9*(d)$}{black}{6}{$1.7*(gora)$}
			
			\lewodd{$.4*(a)+.3*(b)+.3*(d)$}{d}{10}{black}
			\lewodd{a}{$.4*(a)+.3*(b)+.3*(d)$}{45}{black}
			\wierzcholekpusty{$.1*(a)+.6*(b)+.3*(d)$}{black}{4}{$1.7*(gora)$}
			
			\wierzcholekpusty{$.5*(b)+.5*(nb)$}{black}{1}{$.8*(boku)$}
			\wierzcholekpusty{$.5*(d)+.5*(nd)$}{black}{2}{$-.8*(boku)$}
			\wierzcholekpusty{$.5*(d)+.5*(b)$}{black}{3}{$1.05*(dol)$}
			
			\lewo{b}{d}{30}{black}
			\lewo{d}{nd}{30}{black}
			\lewo{nb}{b}{30}{black}
			\wierzcholek{a}{black}{}{gora}
			\wierzcholek{b}{black}{}{$(dol)+(0.2,-0.05)$}
			\wierzcholek{nb}{black}{}{gora}
			\wierzcholek{nd}{black}{}{gora}
			\wierzcholek{d}{black}{}{$(dol)+(-0.2,-0.05)$}
			%\kierunek{d}{$(0.04,0.48)$}{$(0.12,-0.12)$}
			%\kierunek{d}{$(0.29,0.41)$}{$(0.12,-0.12)$}
			%\kierunek{d}{$(0.42,0.16)$}{$(0.12,-0.12)$}
		\end{tikzpicture}
		\label{fig:multicyclesone}
	}
	\hfill
	\subfloat[]
	{
		\begin{tikzpicture}[scale=#1]
			\punkty			
			
			\prostodd{b}{a}{black}
			\wierzcholekpusty{$.6*(a)+.27*(b)+.13*(d)$}{black}{5}{$-0.8*(gora)$}
			
			\lewoprawodd{b}{a}{50}{black}
			\wierzcholekpusty{$.1*(a)+.25*(d)+.65*(b)$}{black}{6}{$-0.5*(gora)$}
			
			\lewodd{$.45*(nb)+.45*(b)+.1*(a)$}{b}{10}{black}
			\lewodd{a}{$.45*(nb)+.45*(b)+.1*(a)$}{60}{black}
			\wierzcholekpusty{$.1*(a)+.8*(b)+.1*(d)$}{black}{4}{$6.0*(gora)$}
			
			\wierzcholekpusty{$.5*(b)+.5*(nb)$}{black}{1}{$.8*(boku)$}
			\wierzcholekpusty{$.5*(d)+.5*(nd)$}{black}{2}{$-.8*(boku)$}
			\wierzcholekpusty{$.5*(d)+.5*(b)$}{black}{3}{$1.05*(dol)$}
			
			\lewo{b}{d}{30}{black}
			\lewo{d}{nd}{30}{black}
			\lewo{nb}{b}{30}{black}
			\wierzcholek{a}{black}{}{gora}
			\wierzcholek{b}{black}{}{$(dol)+(0.2,-0.2)$}
			\wierzcholek{nb}{black}{}{gora}
			\wierzcholek{nd}{black}{}{gora}
			\wierzcholek{d}{black}{}{$(dol)+(-0.2,-0.2)$}
			\wierzcholek{d}{black}{}{$(dol)+(-0.2,0.0)$}
			%\kierunek{b}{$(-0.45,0.2)$}{$(-0.12,-0.12)$}
			%\kierunek{b}{$(-0.29,0.41)$}{$(-0.12,-0.12)$}
			%\kierunek{b}{$(0.01,0.62)$}{$(-0.12,-0.12)$}
		\end{tikzpicture}
		\label{fig:multicyclestwo}
	}
}
\title{Quadratic coefficients of Goulden--Rattan character polynomials}
\author[Mikołaj Marciniak]{Mikołaj Marciniak\thanks{\href{mailto:marciniak@mat.umk.pl}{marciniak@mat.umk.pl}. Mikołaj Marciniak was supported by Narodowe Centrum Nauki, grant number 2017/26/A/ST1/00189 and Narodowe Centrum Badań i Rozwoju, grant number POWR.03.05.00-00-Z302/17-00.}\addressmark{1}}
\address{\addressmark{1}Interdisciplinary Doctoral School “Academia Copernicana”, Faculty of Mathematics and Computer Science, Nicolaus Copernicus University in Toruń, ul.~Chopina 12/18, 87-100 Toruń, Poland}
\abstract{ Goulden--Rattan polynomials give 
the exact value of the subdominant part of 
the normalized characters of the symmetric 
groups in terms of certain quantities $(C_i)$ 
which describe the macroscopic shape of the 
Young diagram. 
The Goulden--Rattan positivity 
conjecture states that the coefficients of 
these polynomials are positive rational 
numbers with small  denominators. We prove 
a special case of this conjecture for the 
coefficient of the quadratic term $C_2^2$ by 
applying certain bijections involving maps 
(i.e., graphs drawn on surfaces).}
\keywords{characters of the symmetric groups, 
free cumulants, Kerov polynomials, 
Goulden--Rattan polynomials, maps}
\begin{document}

\maketitle

%\newpage

\section{Introduction}

\subsection{Normalized characters}

Characters are a basic tool of representation
theory. After normalization, they are also
useful in asymptotic problems.

If $k\leq n$ are natural numbers, then any
permutation $\pi \in S_k$ can also be treated 
as an element of the larger symmetric group 
$S_n$ by adding $n-k$ additional fixpoints. 
For any permutation $\pi \in S_k$ and any
irreducible representation $\rho^{\lambda}$ 
of the symmetric group $S_n$ which corresponds 
to the Young diagram $\lambda$, we define 
\emph{the normalized character}
$$
\Sigma_{\pi}(\lambda)
= \begin{cases} 
          n(n-1)\cdots(n-k+1)\frac{\operatorname{Tr}\rho^{\lambda}(\pi)}{\text{dimension of }\rho^{\lambda}} & \text{for } k \leq n,\\
          0 & \text{otherwise.}
\end{cases}
$$

Of particular interest are the character
values on the cycles, therefore we
will use the shorthand notation
$$\Sigma_{k}(\lambda)=\Sigma_{(1,2,\ldots,k)}(\lambda).$$

\subsection{Free cumulants}

\emph{Free cumulants} are an important tool
of free probability theory \cite{VDN92} and
random matrix theory \cite{Voi91}. In the
context of the representation theory of the
symmetric groups they can be defined as follows,
see \cite{Bia03}. For a Young diagram $\lambda$
we define its free cumulants 
$R_2(\lambda), R_3(\lambda), \ldots$ as
$$R_k(\lambda)=\lim_{s\to\infty} \frac{1}{s^k}\Sigma_{k-1}(s\lambda),$$
where the diagram $s\lambda$ is created from 
the diagram $\lambda$ by dividing each box 
of $\lambda$ into an $s\times s$ square. 

The free cumulants have been defined in such a way 
as to be very helpful for studying asymptotic behaviour 
of the characters on a cycle of length $k$ when the size 
of the Young diagram tends to infinity \cite{Bia98}.

\subsection{Kerov character polynomials}

Kerov \cite{Ker00} formulated the following result: 
for each permutation $\pi$ and any Young
diagram $\lambda$, the normalized 
character $\Sigma_{\pi}(\lambda)$ is 
equal to the value of some polynomial 
$K_{\pi}(R_2(\lambda), R_3(\lambda), \ldots)$
(now called \emph{the Kerov character polynomial})
with integer coefficients. The first 
published proof of this fact was provided 
by Biane \cite{Bia03}. The Kerov character
polynomial is \emph{universal} because it 
does not depend on the choice of $\lambda$. 
We are interested in the values of the 
characters on cycles, therefore for 
$\pi=(1, 2, \ldots, k)$ we use 
the simplified notation
\begin{align}
\label{kerpol}
\Sigma_k=K_k(R_2, R_3, \ldots)
\end{align}
for such Kerov polynomials. The first few Kerov polynomials $K_k$ are as follows:
\begin{align*}
K_1&=R_2,\\
K_2&=R_3,\\
K_3&=R_4+R_2,\\
K_4&=R_5+5R_3,\\
K_5&=R_6+15R_4+5R_2^2+8R_2,\\
K_6&=R_7+35R_5+35R_3R_2+84R_3, \\
K_7&=R_8+180R_2+224R_2^2+14R_2^3+56R_3^2+469R_4+84R_2R_4+70R_6.
\end{align*}

Kerov conjectured that the coefficients of 
the polynomial $K_k$ are \emph{non-negative} integers.
Goulden and Rattan \cite{GR05} found an explicit 
formula for the coefficients of the Kerov
polynomial $K_k$; unfortunately, their formula 
was complicated and did not give any combinatorial
interpretation to the coefficients. Later, 
F\'{e}ray proved positivity \cite{Fer09} and together 
with Dołęga and Śniady found a combinatorial 
interpretation of the coefficients \cite{DFS10}.
In this paper, we will use the combinatorial
interpretation given by them in the special case 
of linear and square coefficients.

\subsection{Goulden--Rattan conjecture}

Goulden and Rattan \cite{GR05} introduced 
a family of functions $C_2,C_3,\dots$ on 
the set of Young diagrams given by
$C_0=1$, $C_1=0$ and
$$C_k^{\lambda}=\frac{24}{k(k+1)(k+2)}\lim_{s\to\infty}\frac{1}{s^k}\Big(\Sigma_{k+1}(s\lambda)-R_{k+2}(s\lambda)\Big)$$
for $k\geq2$.

Śniady \cite{Sni06} proved the explicit form of $C_k$
(conjectured by Biane \cite{Bia03}) as a polynomial in the 
free cumulants $R_2, R_3, \ldots$ given by
\begin{align}
\label{cformula}
C_k=\sum_{\substack{j_2,j_3,\ldots \geq 0 \\ 2j_2+3j_3+\cdots=k}}
(j_2+j_3+\cdots)!\prod_{i\geq 2} \frac{\big( (i-1)R_i\big)^{j_i}}{j_i!}
\end{align}
for $k\geq 2$.
The aforementioned formula of Goulden and Rattan for 
the Kerov polynomials was naturally expressed in 
terms of these quantities $C_2,C_3,\dots$ \cite{GR05}. 
More specifically, they constructed an explicit 
polynomial $L_k$ with rational coefficients such that 
\begin{align}
\label{grpol}
K_k-R_{k+1}=L_k(C_2, C_3, \ldots).
\end{align}
These polynomials are called 
\emph{the Goulden--Rattan polynomials}. 
They formulated the following conjecture:
\begin{conjecture}
\label{hipotezaGR}
The coefficients of the Goulden--Rattan polynomials 
are non-negative numbers with small denominators. 
\end{conjecture}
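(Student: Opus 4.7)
The plan is to prove not the full conjecture—which is far beyond reach with current techniques—but its special case concerning the coefficient $[C_2^2]\, L_k$, in line with the scope announced in the abstract. The strategy splits into an algebraic reduction to a statement purely in free cumulants, followed by a combinatorial bijective argument on maps.

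For the reduction, observe that formula~(\ref{cformula}) is triangular in the free cumulants and in low weight yields $C_2 = R_2$, $C_3 = 2R_3$, and $C_4 = R_2^2 + 3R_4$. Since the substitution $C_i \mapsto (\text{polynomial in the } R_j)$ preserves total weight (with $\deg R_i = \deg C_i = i$), the weight-$4$ homogeneous part of $L_k$ in the $C$-variables is exactly $\alpha\, C_2^2 + \beta\, C_4$. Comparing with the weight-$4$ part of $K_k - R_{k+1}$ in free cumulants yields
\begin{equation*}
[C_2^2]\, L_k \;=\; [R_2^2]\, K_k \;-\; \tfrac{1}{3}\,[R_4]\, K_k
\end{equation*}
for odd $k \geq 5$; for even $k$ the coefficient vanishes trivially by a parity argument on the weights occurring in $K_k$. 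The formula matches the known data from the excerpt: $[C_2^2]\, L_5 = 5 - 15/3 = 0$ and $[C_2^2]\, L_7 = 224 - 469/3 = 203/3$, a positive rational with denominator $3$, consistent with the conjecture.

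Next I would invoke the Dołęga--Féray--Śniady interpretation, which realises $[R_2^2]\, K_k$ and $[R_4]\, K_k$ as (signed) enumerations of certain maps---graphs embedded in oriented surfaces---with prescribed vertex degrees and a distinguished face of length $k$. The task then becomes constructing an explicit $3$-to-$1$ correspondence from a subset of the maps counted by $[R_2^2]\, K_k$ onto those counted by $[R_4]\, K_k$, via a local surgery: contracting the edge between two marked degree-$2$ vertices produces a single marked degree-$4$ vertex, and each such vertex admits three distinct splittings inverse to the contraction. The residual maps not in the domain of this correspondence should then be identified with a natural combinatorial object whose cardinality is visibly non-negative and equal to $3\, [C_2^2]\, L_k$, thereby establishing both positivity and the denominator bound simultaneously.

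The main obstacle, I anticipate, lies in controlling the surgery rigorously within the DFS framework: one must verify that the contraction/splitting preserves the relevant class of maps (genus, face structure, labelling conventions), that signs from the DFS formula combine as predicted, and that the factor $3$ arises intrinsically from the three local splittings at a degree-$4$ vertex rather than as a normalisation artefact. A secondary challenge is the handling of degenerate configurations---when the two marked degree-$2$ vertices are not joined by a single simple edge, when that edge is a loop, or when the vertices coincide---which must either be excluded from the bijection or shown to contribute exactly to the $[C_2^2]$ count. The case $k = 7$, where the bijection must recover the target value $203/3$, provides a concrete nontrivial test before pushing to general odd $k$.
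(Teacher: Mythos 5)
Your algebraic reduction is exactly the paper's: formula \eqref{cformula} gives $C_2=R_2$ and $C_4=R_2^2+3R_4$, whence $[C_2^2]L_k=[R_2^2]K_k-\tfrac13[R_4]K_k$, and the vanishing for even $k$ is indeed a parity fact. The gap is in the combinatorial half. In the Dołęga--Féray--Śniady interpretation both counts enumerate \emph{expanders with the same number of edges} $k$, one face, and (by Euler's formula) the same total of four vertices: $[R_2^2]K_k$ counts maps with two black and two white vertices, $[R_4]K_k$ counts maps with one black and three white vertices. The indices $2,2$ and $4$ record weights and numbers of white vertices, not vertex degrees; there are no distinguished degree-$2$ or degree-$4$ vertices, and the enumeration is unsigned. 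Your proposed surgery---contracting an edge between two marked degree-$2$ vertices to create a degree-$4$ vertex---therefore does not act on these objects: contraction changes the edge count (destroying the $k$-edge condition), and it merges two vertices within one colour class, whereas the actual passage from the $[R_2^2]$-objects to the $[R_4]$-objects must turn a \emph{black} vertex \emph{white} while preserving all $k$ edges, bipartiteness, and the single face. Moreover, a genuinely $3$-to-$1$ surjection from a subset of the first family onto the second would force $|X_k|\ge 3|Y_k|$, which is false already at $k=7$ ($224$ versus $469$); what is needed is the opposite covering, $|Y_k|\le 3|X_k|$.

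The paper gets this from a different local move, \emph{edge sliding}: all edges incident to one black vertex $b_1$ of an $X_k$-map are slid along the face boundary so that their opposite ends migrate onto the other black vertex $b_2$; recolouring $b_1$ white then yields a valid bipartite one-face map with three white vertices and the same $k$ edges, and the move is an involution, hence reversible. Three variants of this sliding (distinguished by the parity of $\deg b_1$ or $\deg b_2$ and by the directions assigned to the edge ends) give three injections whose images cover $Y_k$ through the decomposition $Y_k=Y_k^{\odd}\cup Y_k^{\even}\cup Y_k^{\rest}$ defined via a descendant relation among white vertices; that covering, not a triple splitting of a degree-$4$ vertex, is where the factor $3$ comes from. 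Your plan would need to be rebuilt around an edge-count-preserving recolouring of this kind rather than around contraction.
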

The first few Goulden--Rattan polynomials are 
as follows \cite{GR05}:
\begin{align*}
K_1&-R_2=0\\
K_2&-R_3=0\\
K_3&-R_4=C_2,\\
K_4&-R_5=\frac{5}{2}C_3,\\
K_5&-R_6=5C_4+8C_2,\\
K_6&-R_7=\frac{35}{4}C_5+42C_3,\\
K_7&-R_8=14C_6+\frac{469}{3}C_4+\frac{203}{3}C_2^2+180C_2.
\end{align*}

Linear coefficients of the Goulden--Rattan polynomials 
are non-negative, because they are equal to certain 
scaled coefficients of the Kerov polynomial:
\[ [ C_j ] L_k=\frac{1}{j-1} [ R_j ] K_k. \]

In this paper we will prove that the coefficient 
of $C_2^2$ is non-negative. We hope that edge sliding we will define in this article
will also be a useful tool in proving non-negativity of 
the square coefficients $[ C_i C_j ] L_k$.
The next step towards the proof of 
Goulden--Rattan conjecture would be to understand 
the cubic coefficients $ [ C_i C_j C_u ] L_k$; we 
hope that our methods will still be applicable there,
nevertheless, there seem to be some difficulties 
related to the inclusion-exclusion principle.

\subsection{Graphs on surfaces, maps and expanders}

We will consider graphs drawn on an oriented
surface. Each face of such a graph has some 
number of edges ordered cyclically by going 
along the boundary of the face  and touching 
it with the right hand. We will call it 
\emph{the clockwise boundary direction}. If we use the left hand 
and visit the edges in the opposite order, we 
will call it {the counterclockwise boundary direction}. 

By \emph{a map} we mean a bipartite 
graph drawn without intersections on an oriented 
and connected surface with minimal genus. The maps 
which we consider have a fixed choice of colouring 
of the vertices, i.e., each 
vertex is coloured black or white, with the edges
connecting the vertices of the opposite colours. 
An example of a map is shown in \cref{fig:mapa}.

\begin{figure}[H]
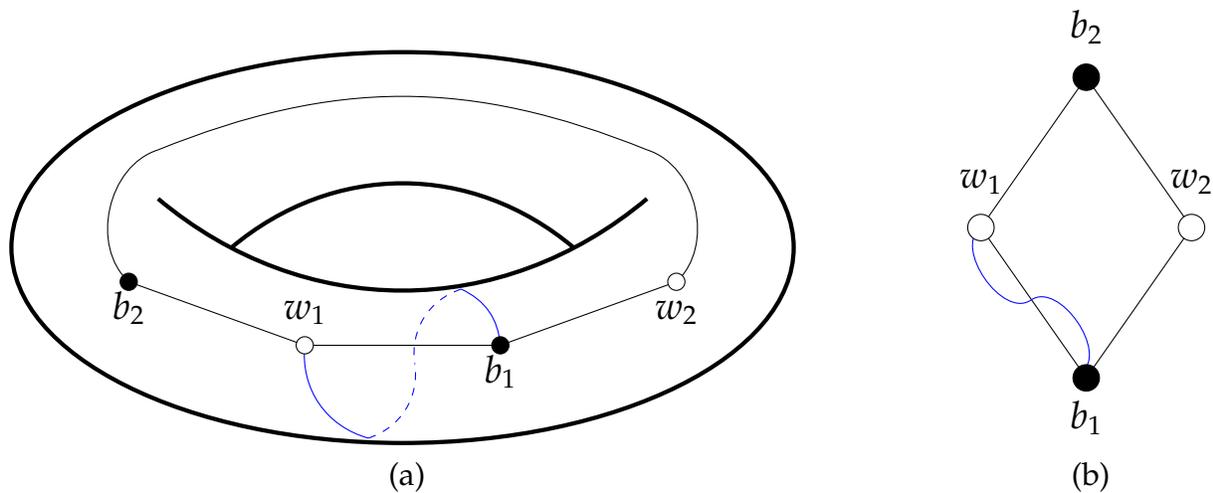

\torus{0.65}{1.0}
\caption
{
	\protect\subref{fig:rystorus} 
		An example of a map with $4$ vertices 
		and $5$ edges drawn on a torus.\\
	\protect\subref{fig:rysnormal} 
		The same map drawn for simplicity 
		on the plane. 
}
\label{fig:mapa}
\end{figure}

\emph{An expander} \cite[Appendix A.1]{Sni19} 
 is a map with the following properties.
\begin{itemize}

\item It has a distinguished edge (known as
the root) and one face.

\item Each black vertex is assigned a natural 
number, known as a weight, such that each 
non-empty proper subset of the set of black 
vertices has more white neighbours than the sum 
of its weights.

\item The sum of all weights is equal to 
the number of white vertices.

\end{itemize}
The map from \cref{fig:mapa} is an expander 
if each black vertex has weight $1$ 
(any choice of the root is valid).

Using the Euler characteristic we get
\begin{align}
\label{euler}
2-2g=\chi=V-k+1
\end{align}
where $g$ denotes the genus of the surface, 
$V$ denotes the number of vertices and $k$ 
denotes the number of edges.

\subsection{Combinatorial interpretation of 
the Kerov polynomial coefficients}
The following two theorems 
\cref{thm1} and \cref{thm3}
% given by Biane and Féray
give a combinatorial interpretation to 
the linear and square coefficients of 
the Kerov character polynomials 
\cite[Theorem 1.2, Theorem 1.3]{DFS10}.
The first is as follows.
\begin{theorem}
\label{thm1} 
For all integers $l\geq2$ and $k\geq 1$ 
the coefficient $[R_l] K_k$ 
is equal to the number of pairs 
$(\sigma_1,\sigma_2)$ of permutations 
$\sigma_1, \sigma_2 \in S(k)$ such that
$\sigma_1\sigma_2 =(1 \; 2 \; \cdots \;k)$ and 
such that $\sigma_2$ consists of one
cycle and $\sigma_1$ consists of $l-1$ cycles.
(We use the convention $ \sigma_1 \sigma_2 = \sigma_2 ( \sigma_1 ) =  \sigma_2 \circ \sigma_1$. )
\end{theorem}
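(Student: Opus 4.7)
The plan is to use a combinatorial expansion of the normalized character $\Sigma_k(\lambda)$ as a sum over factorizations of the $k$-cycle into pairs of permutations, and then extract the coefficient of $R_l$ via an Euler characteristic/degree analysis. This combines two classical ingredients: a Stanley--F\'eray-type formula for normalized characters, and the asymptotic definition of free cumulants.

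First I would invoke the formula expressing $\Sigma_k(\lambda)$ as a sum over factorizations $\sigma_1 \sigma_2 = (1,2,\ldots,k)$, where each term is a polynomial $N_{\sigma_1,\sigma_2}(\lambda)$ in the multirectangular coordinates of $\lambda$ of homogeneous degree $c(\sigma_1) + c(\sigma_2)$ under the scaling $\lambda \mapsto s\lambda$ (here $c(\cdot)$ denotes the number of cycles). Each factorization corresponds naturally to a bipartite map: black vertices indexed by cycles of $\sigma_1$, white vertices indexed by cycles of $\sigma_2$, and $k$ edges labelled $1,\ldots,k$, drawn on an oriented surface whose genus is pinned down by \eqref{euler}.

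Next I would extract $[R_l] K_k$ by using the definition $R_l(\lambda) = \lim_{s\to\infty} s^{-l}\, \Sigma_{l-1}(s\lambda)$ and writing both sides of \eqref{kerpol} as polynomials in $s$ after the dilation $\lambda \mapsto s\lambda$. Since $R_l$ contributes in degree $l$, only factorizations with $c(\sigma_1) + c(\sigma_2) = l + 1$ (accounting for the single face of the map) can feed the linear term; by \eqref{euler}, this forces the associated map to be planar with exactly one face. Planarity and the one-face condition then force $\sigma_2$ to be a single $k$-cycle and $\sigma_1$ to have exactly $l-1$ cycles, matching the statement.

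The main obstacle will be to verify that each admissible pair contributes exactly $+1$ to the coefficient and that no higher-degree factorizations feed back into $[R_l]K_k$ after inverting the relation $\Sigma_k = K_k(R_2, R_3, \ldots)$. This requires identifying the top-degree coefficient of $N_{\sigma_1,\sigma_2}(s\lambda)$ with a product $R_{i_1} R_{i_2} \cdots$ through the planar-map / non-crossing interpretation of free cumulants, and then carefully tracking how these products invert into a single $R_l$ only for the qualifying factorizations. The bookkeeping of leading-order asymptotics, together with the absence of sign cancellations for the planar terms, is where the combinatorial content of the theorem genuinely lives.
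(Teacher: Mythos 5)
This statement is not proved in the paper at all: it is quoted from Dołęga--Féray--Śniady \cite[Theorem 1.2]{DFS10}, so there is no in-paper argument to compare against, and your sketch must stand on its own. It does not: the central deduction is wrong. Starting from a Stanley--Féray-type expansion $\Sigma_k=\sum_{\sigma_1\sigma_2=(1,\ldots,k)}N_{\sigma_1,\sigma_2}$ and grading by the dilation $\lambda\mapsto s\lambda$ is a reasonable opening (and is indeed where \cite{DFS10} starts), and the degree count isolates $c(\sigma_1)+c(\sigma_2)=l$ (not $l+1$; the face count is always $1$ because $\sigma_1\sigma_2$ is a full cycle and does not enter the degree). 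But the two conclusions you draw from this are false. First, \eqref{euler} gives $2-2g=c(\sigma_1)+c(\sigma_2)-k+1$, so for $l<k+1$ the associated map has strictly \emph{positive} genus; planarity occurs only for the top term $R_{k+1}$. For example $[R_2]K_5=8$ counts factorizations with $c(\sigma_1)+c(\sigma_2)=2$, whose maps live on a genus-$2$ surface. Second, and more seriously, the degree condition $c(\sigma_1)+c(\sigma_2)=l$ does not force $\sigma_2$ to be a single cycle: a factorization with $c(\sigma_2)=2$ and $c(\sigma_1)=l-2$ has the same degree and a priori feeds the coefficients of the monomials $R_{l_1}R_{l_2}$ with $l_1+l_2=l$ as well as $[R_l]$. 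The identification ``number of cycles of $\sigma_2$ equals the number of $R$-factors in the monomial'' is precisely the content of the theorem (compare \cref{thm3}, where $\sigma_2$ has $l_1+l_2-2$ cycles for the quadratic coefficients); it is not a consequence of the Euler characteristic, nor of planarity even where planarity holds, since a one-face planar bipartite map is a plane tree and can have arbitrarily many white vertices.

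The paragraph in which you acknowledge ``the main obstacle'' --- that each admissible pair contributes exactly $+1$ and that nothing else feeds back after inverting $\Sigma_k=K_k(R_2,R_3,\ldots)$ --- is in fact the entire proof. The change of basis between the $\Sigma_k$ and the free cumulants is triangular but with nontrivial lower-order corrections (this is exactly why Kerov polynomials have lower-degree terms at all), so no leading-order asymptotic argument can see $[R_l]K_k$ for $l<k+1$; one must control all orders simultaneously and show that the signed contributions organize into the stated positive count. That is the delicate cancellation argument of \cite{DFS10}, and it is absent here. As written, your proposal establishes at most the top coefficient $[R_{k+1}]K_k=1$ and leaves the general statement unproved.
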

The expanders are a graphical interpretation 
of these pairs of permutations.
There is a natural bijection between such 
pairs of permutations $(\sigma_1,
\sigma_2)$ and the expanders with one face, 
one black vertex, $l-1$ white
vertices and $k$ edges. Additionally, one edge 
is selected as the root and the unique black 
vertex has a weight $l-1$. More precisely:

\begin{itemize}

\item The edges are numbered $1, 2, \dots, k$.
The edge with number $1$ is selected as the root.

\item \emph{The counterclockwise angular cyclic order of the 
edges on a given vertex} (i.e., the order of edges 
ending at this vertex around it) corresponds to 
a cycle of a permutation
depending on the colour of this vertex, i.e., 
$\sigma_1$ for white and $\sigma_2$ for black
(in this case we have a unique cycle of 
the permutation $\sigma_2$). 

\item The unique face corresponds to the unique 
counterclockwise boundary
cycle of the permutation $(1 \; 2 \; \cdots \; k)$.

\end{itemize}
Since there is only one face, the root determines the 
numbering of all edges. We can reformulate \cref{thm1} as follows. 
(See \cref{fig:reformone} for an example.)
\begin{theorem}
\label{thm2}
For all integers $l\geq2$ and $k\geq 1$ the 
coefficient $[R_l] K_k$ is equal to the number 
of expanders with $k$ edges, $l-1$ white vertices 
and $1$ black vertex with the weight $l-1$.
\end{theorem}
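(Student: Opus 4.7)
The plan is to derive \cref{thm2} from \cref{thm1} by making precise the bijection sketched in the text between pairs $(\sigma_1, \sigma_2) \in S(k) \times S(k)$ satisfying $\sigma_1 \sigma_2 = (1, 2, \ldots, k)$ (with $\sigma_2$ a single cycle and $\sigma_1$ consisting of $l-1$ cycles) and expanders with $k$ edges, $l-1$ white vertices, one black vertex of weight $l-1$, and a root edge labelled~$1$.

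First I would construct the map from a pair $(\sigma_1, \sigma_2)$ to a decorated bipartite graph. I create a white vertex for each cycle of $\sigma_1$ (yielding $l-1$ white vertices), one black vertex for the unique cycle of $\sigma_2$, and edges labelled $1, \ldots, k$, where edge~$i$ connects the white vertex corresponding to the $\sigma_1$-cycle containing $i$ to the black vertex, with counterclockwise angular ordering around each vertex prescribed by the corresponding cycle. I would then embed this graph on the oriented surface of minimal genus consistent with these rotation systems; since the faces of such an embedding correspond bijectively to the cycles of $\sigma_1 \sigma_2 = (1, 2, \ldots, k)$, the embedding has exactly one face. The genus is forced by the Euler relation~\eqref{euler}.

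Next I would check that the resulting object meets the definition of an expander in \cite[Appendix A.1]{Sni19}. Marking edge $1$ as the root and assigning the unique black vertex weight $l-1$, the total weight equals $l-1$, matching the number of white vertices. The condition on non-empty proper subsets of black vertices is vacuous because there is only one black vertex, so the expander axioms reduce to the one-face property verified above. For the inverse direction, given such an expander I recover $\sigma_1$ and $\sigma_2$ from the counterclockwise angular cyclic orderings around the white and black vertices, using the canonical edge labelling induced by the root together with the rule that the unique face corresponds to the cycle $(1, 2, \ldots, k)$; this automatically forces $\sigma_1 \sigma_2 = (1, 2, \ldots, k)$, while the number of cycles of $\sigma_1$ equals the number $l-1$ of white vertices.

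The two constructions are mutually inverse, so the cardinalities match, and combined with \cref{thm1} this yields \cref{thm2}. The only subtle point, and the main step to get right carefully, is the bookkeeping that translates between the clockwise/counterclockwise conventions for boundary cycles and for angular orderings so that the product $\sigma_1 \sigma_2$ indeed reads off the cyclic order around the unique face as $(1, 2, \ldots, k)$; once that convention is pinned down, the rest of the argument is a routine verification that the rotation system, face structure, and root determine one another.
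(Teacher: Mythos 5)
Your proposal is correct and follows essentially the same route as the paper, which derives \cref{thm2} from \cref{thm1} via exactly this bijection between pairs $(\sigma_1,\sigma_2)$ and rooted one-face bipartite maps with one black vertex, using the rotation system around each vertex to encode the cycles and the single face to encode $(1,2,\ldots,k)$. Your additional remarks (the weight condition being automatic for a single black vertex, and the clockwise/counterclockwise convention being the only delicate point) are consistent with the paper's sketch.
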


\begin{figure}[H]
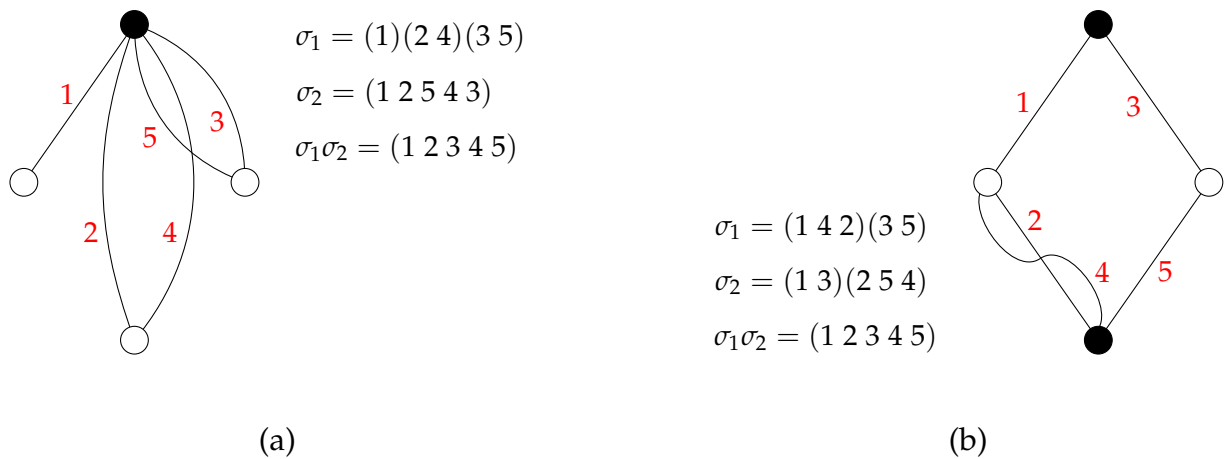

\reform{1.05}
\caption
{
		Examples of expanders with $5$ edges and 
		their corresponding pair of permutations 
		$\sigma_1, \sigma_2$ such that 
		$\sigma_1 \sigma_2 = (1 \; 2\; 3 \; 4 \; 5)$. 
		The root is assigned the number $1$. 
		\protect\subref{fig:reformone} 
		The expander with one black vertex 
		and three white vertices. 
		\protect\subref{fig:reformtwo}
		The expander with two black vertices 
		and two white vertices.
}
\end{figure}

Similarly we use the second theorem \cite[Theorem 1.3]{DFS10} 
for square coefficients.
\begin{theorem}
\label{thm3}
For all integers $l_1, l_2\geq 2$ and $k\geq 1$ 
the coefficient $[R_{l_1} R_{l_2}] K_k$ is equal 
to the number of triples $(\sigma_1, \sigma_2, q)$ 
with the following properties.
\begin{itemize}

\item The permutations $\sigma_1, \sigma_2 \in S_k$ 
fulfill the equality $\sigma_1 \sigma_2=(1 \; 2 \; \cdots \; k)$.

\item The permutation $\sigma_1$ consists of two cycles and 
the permutation $\sigma_2$ consists of $l_1+l_2-2$ cycles.

\item The function $q$ associates the numbers 
$l_1$ and $l_2$ to the two cycles of $\sigma_1$. 
Furthermore, for each cycle $c$ of $\sigma_1$ there 
exist at least $q(c)$ cycles of $\sigma_2$ which nontrivially
intersect $c$.
\end{itemize}
\end{theorem}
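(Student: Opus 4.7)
The plan is to extend the bijective argument behind Theorem \ref{thm1} (and its reformulation as Theorem \ref{thm2}) from a single factor $R_l$ to a product $R_{l_1}R_{l_2}$. The starting point would again be a combinatorial expression for $\Sigma_k(\lambda)$ as a sum over pairs $(\sigma_1,\sigma_2)\in S_k\times S_k$ with $\sigma_1\sigma_2=(1,2,\ldots,k)$, each pair contributing a certain polynomial in the contents of $\lambda$. This identification assigns to every pair a bipartite map drawn on an oriented surface, where the cycles of $\sigma_1$ correspond to white vertices, the cycles of $\sigma_2$ correspond to black vertices, and the unique face comes from $(1,2,\ldots,k)$.

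Next I would express the free cumulants $R_i(\lambda)$ in the same elementary variables (either through the defining polynomiality or through the noncrossing-partition interpretation), so that the Kerov polynomial identity $\Sigma_k=K_k(R_2,R_3,\ldots)$ becomes an algebraic identity among polynomials in the same combinatorial quantities. The coefficient $[R_{l_1}R_{l_2}]K_k$ should then be read off by isolating the contribution of those maps whose ``white side'' consists of exactly two vertices (corresponding to the two cycles of $\sigma_1$) whose incidences will eventually be labelled by $l_1$ and $l_2$. Since each $R_l$ is a top-degree term of the polynomial associated with a single white vertex of valence $l$, the bidegree condition forces $\sigma_1$ to have exactly two cycles and $\sigma_2$ to have exactly $l_1+l_2-2$ cycles, matching the statement.

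The role of the function $q$ and of the non-degeneracy requirement ``at least $q(c)$ cycles of $\sigma_2$ intersect $c$ nontrivially'' emerges from an inclusion--exclusion over how the black vertices (cycles of $\sigma_2$) distribute among the two white vertices (cycles of $\sigma_1$). After expanding each $R_{l_i}$ using its free-cumulant formula and collecting terms, one is left with alternating-sign sums indexed by subsets of black vertices that are ``missed'' by a given white vertex; the surviving terms are precisely those triples $(\sigma_1,\sigma_2,q)$ in which every cycle $c$ of $\sigma_1$ meets at least $q(c)$ cycles of $\sigma_2$. This is also the step where the bijective matching with expanders is preserved, the two cycles of $\sigma_1$ becoming two black vertices of weights $l_1-1$ and $l_2-1$ (compare the reformulation leading to Theorem \ref{thm2}, illustrated by \cref{fig:reformone} and \cref{fig:reformtwo}).

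The main obstacle is the inclusion--exclusion step: it is not immediately obvious that the sign-alternating cancellations leave a \emph{positive} count rather than a signed one. Controlling this requires a careful Möbius-type argument on the lattice of partitions of the set of cycles of $\sigma_2$, together with the observation that each missed intersection can be absorbed by replacing the corresponding free cumulant by a lower-degree quantity; this is the same delicate mechanism that underlies F\'eray's positivity proof for Kerov polynomials, and verifying that it specializes cleanly to the bicyclic setting of Theorem \ref{thm3} is what makes the statement non-trivial.
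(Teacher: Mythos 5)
Note first that the paper does not prove this statement at all: it is imported verbatim as \cite[Theorem 1.3]{DFS10}, exactly as \cref{thm1} is imported as their Theorem 1.2, so there is no internal proof to compare your attempt against; the only meaningful question is whether your sketch would reconstitute the Dołęga--F\'eray--Śniady argument, and in its present form it would not. What you have written is a plan, and the plan openly defers the one hard point. The entire content of the theorem is the precise combinatorial condition on the triple $(\sigma_1,\sigma_2,q)$ --- that $q$ assigns $l_1$ and $l_2$ bijectively to the two cycles of $\sigma_1$ and that each cycle $c$ of $\sigma_1$ meets at least $q(c)$ cycles of $\sigma_2$ --- and you only assert that this condition ``emerges from an inclusion--exclusion,'' while admitting in your last paragraph that you cannot control the resulting sign-alternating cancellations. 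That cancellation analysis is precisely where the machinery of F\'eray's positivity proof (Stanley-type polynomial expressions for $\Sigma_k$ and for the free cumulants in multirectangular coordinates, together with the bookkeeping of which cycles of $\sigma_2$ each cycle of $\sigma_1$ must meet) is actually needed; without it you have shown neither that the surviving terms are indexed by the stated triples nor even that the net count is nonnegative. So the proposal points at the right circle of ideas but does not contain a proof.

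There is also an internal inconsistency you should repair before the outline can be made rigorous. You begin by declaring that cycles of $\sigma_1$ correspond to white vertices and cycles of $\sigma_2$ to black vertices --- the convention the paper uses for \cref{thm1}, where $\sigma_2$ has a single cycle and the expander has a single black vertex --- and you attach each $R_l$ to ``a single white vertex of valence $l$.'' But for \cref{thm3} the counts force the opposite identification: the two cycles of $\sigma_1$ must become the two black vertices of weights $l_1-1$ and $l_2-1$, and the $l_1+l_2-2$ cycles of $\sigma_2$ the white vertices, as in \cref{thm4} and as you yourself state at the end. Carried through literally, your ``bidegree'' argument in the second paragraph attaches $l_1$ and $l_2$ to the wrong side of the bipartition, and in the paper's picture a single $R_l$ corresponds not to one white vertex of valence $l$ but to a black vertex of weight $l-1$ with $l-1$ white neighbours.
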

Analogously, we can also reformulate \cref{thm3}. 
(See \cref{fig:reformtwo} for an example.)
\begin{theorem}
\label{thm4}
For all integers $l_1, l_2\geq 2$ and $k\geq 1$ 
the coefficient $[R_{l_1} R_{l_2}] K_k$ is equal 
to the number of expanders with $k$ edges, 
$l_1+l_2-2$ white vertices and $2$ black 
vertices with weights $l_1-1, l_2-1$.
\end{theorem}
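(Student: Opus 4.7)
The plan is to adapt the bijective reformulation that led from \cref{thm1} to \cref{thm2}, now applied to the triples $(\sigma_1,\sigma_2,q)$ of \cref{thm3}. Accepting \cref{thm3}, it suffices to construct a bijection between these triples and the expanders with $k$ edges, $l_1+l_2-2$ white vertices, and $2$ black vertices with weights $l_1-1$ and $l_2-1$. Since the coefficient on the right-hand side of \cref{thm4} counts expanders, such a bijection finishes the proof.

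To build the bijection I would proceed exactly as in the one-black-vertex case. Given $(\sigma_1,\sigma_2,q)$, draw $k$ edges labelled $1,\dots,k$, with edge $1$ distinguished as the root. Each cycle of $\sigma_1$ produces one black vertex, with the counterclockwise angular order of its incident edges dictated by the cycle; each cycle of $\sigma_2$ produces a white vertex in the same way. Identify the endpoints according to these cycles. The single long cycle $\sigma_1\sigma_2=(1,2,\ldots,k)$ corresponds to the unique face of the resulting map, and by a standard Euler-characteristic argument (with $V=l_1+l_2$, one face, $k$ edges) this map is drawn on a surface whose genus is determined, and since the construction is reversible we obtain a bijection with bipartite one-face maps on some surface. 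Finally, the weight of each black vertex is defined to be $q(c)-1$, so the two weights are $l_1-1$ and $l_2-1$, summing to $l_1+l_2-2$, the number of white vertices, which is one of the expander axioms.

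The key step is to verify that the intersection condition on $q$ in \cref{thm3} is equivalent to the expander inequality. Because there are only two black vertices, the only non-empty proper subsets of the set of black vertices are the two singletons $\{c\}$, so the expander axiom collapses to the requirement that for each cycle $c$ of $\sigma_1$, the number of distinct cycles of $\sigma_2$ meeting $c$ strictly exceeds the weight of $c$, i.e., is at least $q(c)$. This is exactly the hypothesis on $q$ in \cref{thm3}. Conversely, from any expander with two black vertices one reads off the cycles of $\sigma_1$ and $\sigma_2$ from the counterclockwise angular orders, sets $q(c)$ to be the weight of the black vertex plus $1$, and the same equivalence ensures that the intersection condition holds.

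The main obstacle is this matching of the analytic condition on $q$ with the graph-theoretic expander axiom; everything else is a routine bookkeeping extension of the bijection used in \cref{thm2}. The argument benefits from the fact that with only two black vertices the axiom is particularly simple; in a hypothetical extension to more black vertices (as would be needed for cubic coefficients), one would have to handle all non-singleton subsets, which is presumably the source of the inclusion-exclusion difficulties alluded to in the introduction.
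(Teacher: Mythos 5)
Your proposal is correct and follows essentially the same route the paper intends: it is the direct analogue of the bijection described between the pairs of permutations of \cref{thm1} and the one-black-vertex expanders of \cref{thm2}, now applied to the triples of \cref{thm3}, with the weights set to $q(c)-1$ so that the singleton expander inequalities become exactly the intersection condition on $q$. The paper leaves this verification implicit ("analogously"), and your spelling out of the equivalence between the $q$-condition and the expander axiom for the two singleton subsets is exactly the intended argument.
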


\subsection{Relationship between coefficients 
of Goulden--Rattan polynomials 
and coefficients of Kerov polynomials}

The formula \eqref{cformula} allows us to 
express $(C_i)$ in terms of free cumulants;
we see that the coefficients of the terms 
$R_i R_j$, $R_{i+j}$, $R_j^2$ and $R_{2j}$ in the expressions
$C_{i} C_{j}$, $C_{i+j}$, $C^2_{j}$ and $C_{2j}$ are given for $i \neq j$ by
\begin{align*}
C_i C_j &= (i-1)(j-1) R_i R_j + 0  R_{i+j} + \text{(sum of other terms)},\\
C_{i+j} &=2(i-1)(j-1)R_iR_j+(i+j-1)R_{i+j}+\text{(sum of other terms)},\\
C_{j}^{2} &=(j-1)^2R_j^2+0 R_{2j}+\text{(sum of other terms)},\\
C_{2j} &=(j-1)^2R_j^2+(2j-1)R_{2j}+\text{(sum of other terms)}. 
\end{align*}
Moreover, any product $C_{i_1} C_{i_2}\cdots C_{i_t}$
of at least $t\geq 3$ factors does not contain any 
of the terms $C_i C_j$, $C_{i+j}, C^2_{j}$ and $C_{2j}$.
It follows that the square coefficients of 
the Goulden--Rattan polynomial are related 
to the coefficients of the Kerov polynomial via
\begin{align*}
\frac{\partial^2L_k}{\partial{C_i}\partial{C_j}}\Bigg|_{0=C_1=C_2=\cdots}
&=\frac{1}{(i-1)(j-1)}\frac{\partial^2K_k}{\partial{R_i}\partial{R_j}}\Bigg|_{0=R_1=R_2=\cdots}
-2\frac{\partial L_k}{\partial{C_{i+j}}}\Bigg|_{0=C_1=C_2=\cdots}\\
&=\frac{1}{(i-1)(j-1)}\frac{\partial^2K_k}{\partial{R_i}\partial{R_j}}\Bigg|_{0=R_1=R_2=\cdots}
-\frac{2}{(i+j-1)}\frac{\partial K_k}{\partial{R_{i+j}}}\Bigg|_{0=R_1=R_2=\cdots}
\end{align*}
for $i \neq j$. Whereas the quadratic coefficients are related via
\begin{align*}
\frac{\partial^2L_k}{\partial{C_j^2}}\Bigg|_{0=C_1=C_2=\cdots}
&=\frac{1}{(j-1)^2}\frac{\partial^2K_k}{\partial{R_j^2}}\Bigg|_{0=R_1=R_2=\cdots}
-2 \frac{\partial L_k}{\partial{C_{2j}}}\Bigg|_{0=C_1=C_2=\cdots}\\
&=\frac{1}{(j-1)^2}\frac{\partial K_k^2}{\partial{R_j^2}}\Bigg|_{0=R_1=R_2=\cdots}
-\frac{2}{(2j-1)}\frac{\partial K_k}{\partial{R_{2j}}}\Bigg|_{0=R_1=R_2=\cdots}
\end{align*}
for any natural number $j$.
Thus, we obtain the explicit formula for 
the square coefficients of the Goulden–Rattan polynomial:
\begin{align}
\label{formula}
[C_j^2]L_k &=\frac{1}{(j-1)^2}[R_j^2]K_k-\frac{1}{2j-1}[R_{2j}]K_k \\
\intertext{and}
[C_iC_j]L_k &=\frac{1}{(i-1)(j-1)}[R_iR_j]K_k-\frac{2}{i+j-1}[R_{i+j}]K_k
\qquad \text{for $i\neq j$.}
\end{align}

\section{The main result} 

Let $Y_k(u)$ denote the set of expanders 
with $k$ edges, $u-1$ white vertices and 
one black vertex. Let $X_k(i, j)$ denote 
the set of expanders with $k$ edges, 
$i+j-2$ white vertices and two black 
vertices with weights $i-1$ and $j-1$. 
Using  \cref{thm2} and \cref{thm4} we can 
also reformulate the Goulden--Rattan 
conjecture for the square coefficients 
in terms of expanders, as follows.
\begin{con} 
    \label{con:GJ2}
Let $i \neq j$ be natural numbers. Then 
\begin{align*}
(2j-1)\ \left\| X_k(j, j) \right\|    &   \geq (j-1)^2\ \left\| Y_k(2j) \right\| \\
\intertext{and}
(i+j-1)\ \left\| X_k(i, j) \right\|  & \geq 2(i-1)(j-1)\ \left\| Y_k(i+j) \right\|
\end{align*}
for any natural number $k$. 
\end{con}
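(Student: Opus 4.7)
The plan is to reduce \cref{con:GJ2} to the construction of explicit injective maps between expander sets, and then to build those maps via edge sliding. By formula~\eqref{formula}, combined with \cref{thm2} and \cref{thm4}, the two stated inequalities are equivalent to $[C_j^2]L_k \geq 0$ and $[C_iC_j]L_k \geq 0$ respectively, so a purely combinatorial argument at the level of expanders suffices. Note that the $(j-1)^2$ (resp.\ $(i-1)(j-1)$) factor matches the product of the black-vertex weights in $X_k(j,j)$ (resp.\ $X_k(i,j)$), while the $2j-1$ (resp.\ $i+j-1$) factor matches the number of white vertices in $Y_k(2j)$ (resp.\ $Y_k(i+j)$); these numerical coincidences dictate what the auxiliary data attached to each expander should be.

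For the diagonal inequality my goal is an injection
\[
\Phi \colon Y_k(2j) \times \{1, \ldots, (j-1)^2\} \hookrightarrow X_k(j,j) \times \{1, \ldots, 2j-1\}.
\]
Given an expander $E \in Y_k(2j)$ with its single black vertex $b$ of weight $2j-1$ and the $(j-1)^2$ marker data, the construction would split $b$ into two black vertices $b_1, b_2$ of weights $j-1$ each by cutting the counterclockwise cyclic edge-ordering at $b$ along positions determined by the marker, and simultaneously it would identify a pair of white vertices so the white count drops to $2j-2$. Edge sliding is used to guarantee that the resulting bipartite map still has exactly one face. The factor $2j-1$ on the right encodes which white vertex was produced by the identification, and this is the piece of information needed to reverse the operation. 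The off-diagonal case is handled by the same scheme with an asymmetric weight split $(i-1,j-1)$; the extra factor of $2$ in $2(i-1)(j-1)$ accounts for the ordered labelling of the two produced black vertices by $\{i-1, j-1\}$.

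The main obstacle is to verify that the output of $\Phi$ is indeed an expander: one must ensure that each of $b_1$ and $b_2$ ends up with strictly more than $j-1$ distinct white neighbours, and that no non-empty proper subset of black vertices violates the expansion inequality. The local structure near $b$ splits into several combinatorial types, depending on whether the two cut arcs are adjacent, whether the identified white vertices are shared between the new black components, and which edges get slid (cf.\ the configurations in \cref{fig:rysall}, \cref{fig:rysodd}, \cref{fig:rysrest}, and \cref{fig:ryseven}). Each case requires a slightly different edge slide to realise the required correspondence while preserving the expander conditions, and the diagonal case $i=j$ needs additional care because of the $b_1 \leftrightarrow b_2$ symmetry.

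Injectivity of $\Phi$ would follow from an explicit left-inverse: from $(F,v) \in X_k(j,j) \times \{1, \ldots, 2j-1\}$ one recovers $(E,\text{markers})$ by merging $b_1, b_2$ back into one black vertex of weight $2j-1$ and reinstating the distinguished white vertex $v$ along the merged boundary arc, with the marker positions read off from where the merging occurs relative to the root. The remaining technical point is to show that this reversal always produces an expander in $Y_k(2j)$ — the expansion condition being vacuous for a single black vertex, this reduces to checking that the map stays connected of genus compatible with \eqref{euler} and retains one face. I expect this step, together with the case analysis in the preceding paragraph, to be where the extension from the $C_2^2$ case to the full conjecture will require the most new input.
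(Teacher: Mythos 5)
There is a genuine gap: what you have written is a strategy outline rather than a proof. The map $\Phi$ is never actually constructed --- you do not specify how the marker in $\{1,\ldots,(j-1)^2\}$ determines the two cut positions in the cyclic order at $b$, which pair of white vertices gets identified, or which edge slides are performed in which configuration --- and you yourself defer the two steps that carry all the content (that the output satisfies the expander condition, and that $\Phi$ admits a left-inverse) to ``where the most new input is required.'' Without these, nothing is proved. Be aware also that the paper does not prove this conjecture either: it establishes only the first inequality in the special case $j=2$ (\cref{mainthm2}), and explicitly leaves the general statement open, so a complete argument here would go strictly beyond the paper.

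It is worth contrasting your proposed mechanism with the one the paper actually uses for $j=2$, because they differ in a way that matters. The paper never splits a black vertex or merges white vertices. It observes that $X_k(j,j)$ and $Y_k(2j)$ have the same total number of vertices ($2j$), edges and faces, and realizes the correspondence by \emph{recolouring} one black vertex of an $X_k(2,2)$-map to white after a single edge slide; the factor $3=2j-1$ arises not from a marker set but from three different decoration schemes (three choices of special edges and of boundary directions on their ends), which produce three injections $X_k\to T_k^{\odd}, T_k^{\even}, T_k^{\rest}$ covering the three pieces of the partition $Y_k=Y_k^{\odd}\cup Y_k^{\even}\cup Y_k^{\rest}$, with the inclusion--exclusion slack $\lvert Y_k^{\odd}\cap Y_k^{\even}\rvert\geq 0$ absorbing the difference. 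Your split-and-merge operation changes the vertex count by $+1$ and then $-1$, so each intermediate step alters the face count or genus, which makes the ``one face'' and expander verifications substantially harder than in the recolouring picture; and your correspondence runs in the opposite direction ($Y\to X$ rather than $X\to Y$). Neither of these is fatal in principle, but they mean the configuration analysis you defer is not a routine adaptation of the paper's $j=2$ argument, and until it is carried out the inequalities remain unestablished.
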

These inequalities are equivalent to the 
positivity of the coefficients $[C_j^2] L_k$ 
and $[C_i C_j] L_k$ respectively. 
In this text we prove only the first 
inequality  in the special case 
$j=2$. We hope to present
a proof of \cref{con:GJ2} in its general 
form in a future paper.

Using \cref{formula} 
we can calculate several examples of 
the coefficient of $C_2^2$ of 
the Goulden--Rattan polynomials
\begin{align*}
[C_2^2] L_4 &=0-0=0,\\
[C_2^2] L_5 &=5-\frac{1}{3} \cdot 15=0,\\
[C_2^2] L_6 &=0-0=0, \\
[C_2^2] L_7 &=224-\frac{1}{3} \cdot 469=\frac{203}{3}, \\
[C_2^2] L_8 &=0-0=0. 
\end{align*}
Note that if $k$ is even then $[C_2^2] L_k=0$ because there does not exist an expander with $4$ vertices and an even number of edges, since
$2-2g=2j-k+1$ by \cref{euler}. Additionally, $[C_2^2]L_1=0$ and $[C_2^2]L_3=0$. Thus we can assume that the number of edges $k$ is odd and $k \geq 5$. 

Let 
\begin{align}
\label{xdef}
X_k &= X_k(2, 2), \\
\label{ydef}
Y_k &= Y_k(4).
\end{align}
The set $X_k$ consists of expanders with 
$2$ black vertices and $2$ white vertices
such that each black vertex is connected 
with both white vertices;
each black vertex necessarily has weight 
equal to $1$. The set $Y_k$ consists of 
expanders with one black vertex 
(which necessarily has weight $3$) 
connected with all $3$ white vertices. 
From now on we will omit the weights 
of the black vertices.

The main goal of this paper is to prove 
the following:
\begin{theorem}
\label{mainthm2}
The inquality
$$3\big\|X_k\big\| \geq \big\|Y_k\big\|$$
is true for any natural number $k$.
\end{theorem}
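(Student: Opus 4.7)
The strategy is to construct an injection $\iota \colon Y_k \hookrightarrow X_k \times \{1, 2, 3\}$, which immediately yields $\|Y_k\| \leq 3\, \|X_k\|$. The factor of three will correspond to a choice of a distinguished white vertex in an element of $Y_k$.

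First, I would analyse the structure of an arbitrary $Y \in Y_k$. Such a $Y$ has a single black vertex $b$ of weight $3$, three white vertices $w_1, w_2, w_3$, one face and $k$ edges; the expander condition on $Y$ is automatic, since the singleton set of black vertices has no non-empty proper subsets. After fixing a distinguished white vertex $w_i$ of $Y$, I would apply an edge sliding operation of the kind illustrated earlier in the paper: walking around the cyclic arrangement of the $k$ edges at $b$, the edges incident to $w_i$ cut this arrangement into arcs; one specific arc is detached and reattached at a newly created black vertex $b'$, while $w_i$ is simultaneously absorbed into one of the other two white vertices. Since no edges are created or destroyed and the face count is preserved, the Euler characteristic matches and the resulting map lies on the same surface as $Y$, making it a candidate element of $X_k$.

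Verifying that the output genuinely belongs to $X_k$ reduces to showing that both black vertices $b$ and $b'$ remain connected to both surviving white vertices---this is the expander condition for the weight-$1$ singletons, which is the only nontrivial requirement defining $X_k$. It translates into a visible combinatorial statement about the interleaving, in the cyclic arrangement of edges around $b$ in $Y$, of the $w_i$-edges with the edges incident to the other two white vertices. Injectivity of $\iota$ will follow by exhibiting its inverse on the image: given a pair $(X, i)$ that arises from the construction, one merges the two black vertices of $X$ into a single weight-$3$ black vertex and splits the chosen white vertex back into two, recovering both $Y$ and its distinguished white vertex.

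The main obstacle will be the expander verification after sliding and pinpointing exactly which arc of the cyclic arrangement around $b$ should be peeled off, so that the output lands in $X_k$ and the sliding is truly reversible in a canonical way. The map $\iota$ is not expected to be surjective---there will be pairs $(X, i)$ which do not correspond to any $Y$---which is why the argument gives the inequality $3\|X_k\| \geq \|Y_k\|$ rather than an equality; identifying the image of $\iota$ and showing that the resulting under-counting is controlled is the combinatorial heart of the argument.
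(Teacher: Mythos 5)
Your high-level strategy -- compare $3\|X_k\|$ with $\|Y_k\|$ by an edge-sliding surgery that trades a white vertex of a $Y_k$-map for a black vertex, with the factor $3$ accounting for the choice of distinguished white vertex -- is indeed the skeleton of the paper's argument (read in the opposite direction). But everything that makes the argument actually close is missing from your plan, and you flag the missing pieces yourself as ``the main obstacle'' and ``the combinatorial heart.'' Concretely: (1) You never say how the distinguished white vertex is chosen, nor why the assignment $Y\mapsto (X,i)$ is injective. The paper avoids having to make a canonical choice: it introduces a descendant relation $w_i\rightarrow w_j$ on white vertices (reachability in two steps along the boundary of the unique face) and splits $Y_k$ into three \emph{overlapping} pieces -- $Y_k^{\odd}$ (an odd-degree white vertex has both others as descendants), $Y_k^{\even}$ (same with even degree), and the complement $Y_k^{\rest}$. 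It then produces, from $X_k$, two bijections and one injection onto decorated versions $T_k^{\odd}, T_k^{\even}, T_k^{\rest}$ of these sets, and concludes with the inclusion--exclusion identity $|Y_k^{\odd}|+|Y_k^{\even}|+|Y_k^{\rest}|-|Y_k|=|Y_k^{\odd}\cap Y_k^{\even}|\geq 0$. (2) The whole case split is driven by parity: $k$ must be odd (for even $k$ both sides vanish by the Euler characteristic), so one black vertex of an $X_k$-map has odd degree and the other even; these two vertices feed the odd and even cases, and the third construction reuses the odd-degree vertex with a different assignment of boundary directions to the edge ends. Your plan contains none of this, and without it there is no reason why three copies of $X_k$ suffice to cover $Y_k$. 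The paper also needs a separate lemma (that every map in $Y_k^{\rest}$ has a white vertex of odd degree greater than $1$) to make the third construction well-defined.

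There is also a concrete problem with the surgery you describe. The paper's operation slides the edge ends of the distinguished white vertex along the face boundary and then simply \emph{recolours} that vertex black (so $w_3$ becomes $b_1$ or $b_2$ while $b$ survives as the other black vertex); reversibility is then automatic because edge sliding is an involution. You instead propose to detach an arc of edges from $b$, attach it to a \emph{new} black vertex $b'$, and merge $w_i$ into another white vertex. This changes the incidence structure in a way the paper never does, and you give no argument that it preserves the single face (hence the genus), that the result is connected and bipartite with each black vertex adjacent to both white vertices, or that the data $(X,i)$ suffices to undo the merge of two white vertices into one. As written, the proposal is a programme rather than a proof, and the programme diverges from the one that is known to work precisely at the points where the real difficulties lie.
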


\section{Maps, expanders, and edge sliding}

In this section, we provide some necessary 
background details for the proof of \cref{mainthm2}.
We will denote a transposition exchanging $a$ and $h$ by $(a \; h)$.
For any set $H \subset \{1, 2, \dots\}$ such that $a, h \in H$, 
we can treat the transposition $(a \; h)$ as a permutation of the set $H$.
(By adding fixed points to the transposition.) Therefore, 
for any permutation $\pi$ of the set $H$, 
the products $\pi \razy (a \; h)$ and $(a \; h) \razy \pi$ 
are also permutations of the set $H$. 

Let $G$ be a graph with edges numbered $1, \dots, k$ drawn without 
intersections on an oriented and connected surface with minimal genus. 
There is a natural bijection between the graph $G$ and a multiset of cycles $M_G$
(cyclic permutations of a subset of $\{1, \dots, k\}$) such that
each number $j$ belongs to exactly two cycles. 
More precisely, the counterclockwise angular cyclic order of 
edges at a given vertex of the graph $G$ (i.e., the order
of edges ending at this vertex around it) corresponds to a single cycle of $M_G$.  
Two examples of a graph with its multiset of cycles are shown in \cref{fig:multi-cycles}. 

\begin{figure}[H]
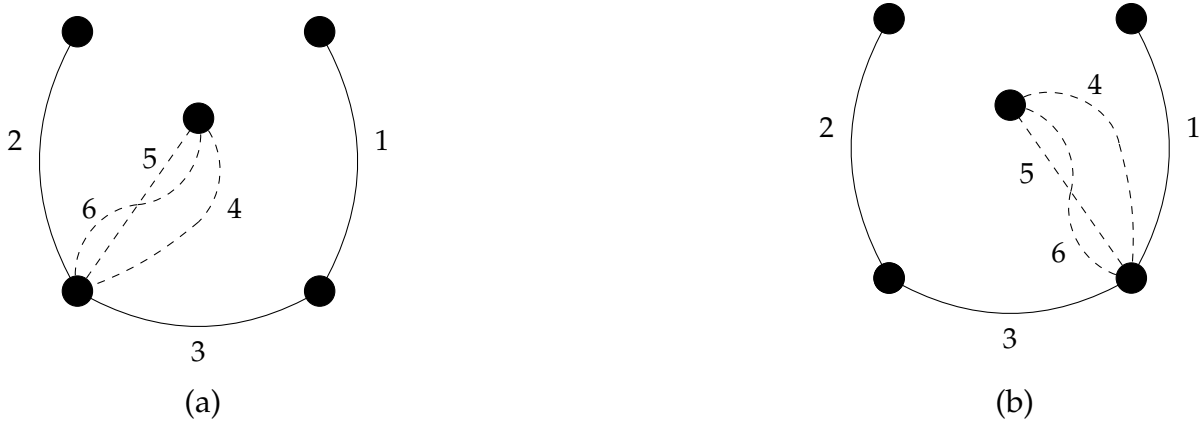

	\multicycles{1.15}
	\caption
	{
		\protect\subref{fig:multicyclesone} The graph with the multiset of cycles $\big\{ (2) (1) (5 \; 6 \; 4) (2 \; 3 \; 4 \; 5 \; 6)(3 \; 1)\big\}$. \linebreak[4]{}
		\protect\subref{fig:multicyclestwo} The graph with the multiset of cycles $\big\{ (2) (1) (5 \; 6 \; 4) (2 \; 3 ) ( 4 \; 5 \; 6 \; 3 \; 1)\big\}$.
	}
	\label{fig:multi-cycles}
\end{figure}

In this section, each end of every edge (or equivalently, each element of every cycle) will have one of three values assigned to it: clockwise direction, counterclockwise direction, or no direction. Note that the two ends of the same edge can be assigned different values. By default, we assume that a cycle element to which we have not assigned a direction has an assigned the value no direction. 

\subsection{Edge sliding for a single number}
Let $M_G$ be the multiset of cycles of a graph $G$. Let $a$ belonging to a cycle $c \in M_G$ be a number with assigned 
the clockwise direction. 
(The number "a" from a cycle different from "c" and any number different from "a" 
from any cycle do not have a direction assigned to them.)
Let $c' \neq c$ be the second cycle of the multiset $M_G$ containing the number $h=c^{-1}(a)$.
We assume that the number $a$ does not belong to the cycle $c'$. 
We define \emph{the single edge sliding for the number $a$ in the clockwise boundary direction from the cycle $c$ to the cycle $c'$ along the number $h$}
as the replacement of the cycles $c, c'$ in the multiset $M_G$ by two new cycles $\es(c), \ess(c')$ given by 
\begin{align*}
\es(c)&= \left[ c \razy \left(a \; h\right) \right] \setminus \left(a\right),\\ 
\ess(c')&= (a \; h) \razy \left[ \left( a \right) \; c' \right] .
\end{align*}
Note that the product  $c \razy \left(a \; h\right)$
consists of two cycles: the cycle $(a)$ and the cycle 
formed from the cycle $c$ by removing the number $a$.
Therefore, the above transformation removes the number $a$ from the 
cycle $c$ and adds it to the cycle $c'$ before the number $h$.
Finally, we change the direction of the number $a$ to the counterclockwise direction.  
%We will say that the number $a$ is sliding along the number $h$. 
We can naturally think of such operations as sliding of edges in a graph. 
An example of the single edge sliding 
is shown in \cref{fig:sliding-one-edge}.

\begin{figure}
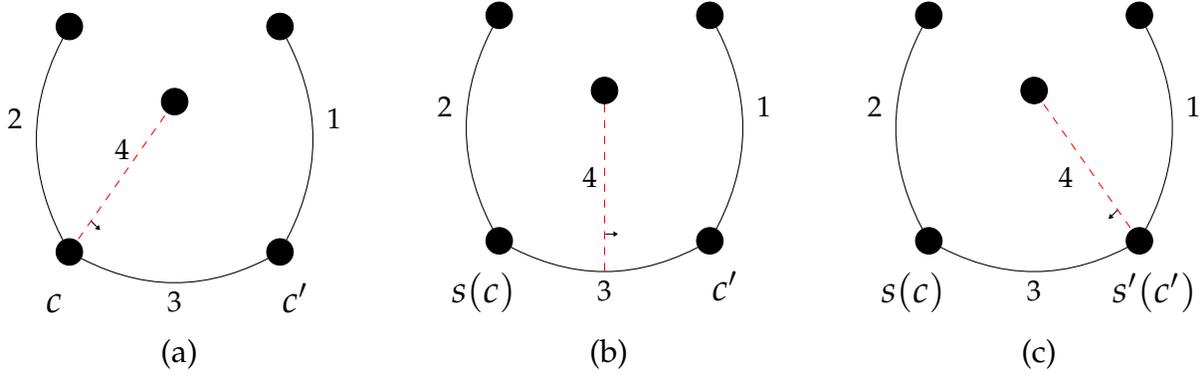

	\slidone{1}
	\caption
	{
		An example of the single edge sliding for the number ${a=4}$ in the clockwise boundary direction 
		from the cycle ${c=(2 \; 3 \; 4)}$ to the cycle ${c'=(1 \; 3)}$ along the number ${h=3}$.
		\protect\subref{fig:onebefore} The initial graph $G$. The edge labelled $a$ is dashed and coloured red.
		\protect\subref{fig:oneduring} Visualization of the single edge sliding.
		\protect\subref{fig:oneafter} The resulting graph with two updated cycles: 
		${\es(c)= \left[ (2 \; 3 \; 4) \razy \left(3 \; 4\right) \right] \setminus \left(4\right) 
		= \left(2 \; 3 \right)}$ and 
		${\ess(c')= (3 \; 4) \razy \left[ \left( 1 \; 3\right) \; \left( 4 \right)\right] = (1 \; 4 \; 3).}$
	}
	\label{fig:sliding-one-edge}
\end{figure}

Analogously, we define the single edge sliding for the number $a$ in \emph{the counterclockwise boundary direction} 
from the cycle $c$ to the cycle $c'$ along \emph{the number $h=c(a)$} as 
the replacement of the cycles $c, c'$ in the multiset $M_G$ by two new cycles $\es(c), \ess(c')$ given by 
\begin{align*}
	\es(c)&= \left[ \left(a \; h\right) \razy c \right] \setminus \left(a\right),\\ 
	\ess(c')&= \left[ \left( a \right) \; c'\right] \razy (a \; h).
\end{align*}

\subsection{Edge sliding for a sequence of numbers}
Let $M_G$ be the multiset of cycles of a graph $G$. 
Let $a_1, \dots, a_l$ be a sequence of successive but not all numbers belonging to a cycle $c\in M_G$, i.e. such that $a_j=c^{j-1}(a_1)$ for each index $j \leq l$. Moreover, to each of the numbers $a_1, \dots, a_l$ is assigned the clockwise direction. Let $c' \neq c$ be the second cycle of the multiset of $M_G$ containing the number $h=c^{-1}(a_1)$.
We assume that the numbers $a_1, \dots, a_l$ do not belong to the cycle $c'$. 
We define \emph{the package edge sliding for the sequence of the numbers $a_1, \dots, a_l$ in the clockwise boundary direction from the cycle $c$ to the cycle $c'$ along the number $h$}
as the replacement of the cycles $c, c'$ in the multiset $M_G$ by two new cycles $\es_l(c), \ess_l(c')$ obtained by recursion
with initial conditions $\es_0(c)=c, \ess_0(c')=c'$ and a recursive step given for each $1 < j \leq l$ by
\begin{align*}
	\es_{j}(c)&= \left[ \es_{j-1}(c) \razy \left(a_j \; h\right) \right] \setminus \left(a_j \right),\\ 
	\ess_{j}(c')&= (a_j \; h) \razy \left[ \left( a_j \right) \; \ess_{j-1}(c') \right] .
\end{align*}
Note that the package edge sliding for a sequence of numbers $a_1, \dots, a_l$ is actually equivalent to the sequential single edge sliding for numbers $a_1, \dots, a_l$.
Therefore, the above transformation removes the numbers $a_1, \dots a_l$ from the 
cycle $c$ and adds them in the same order to the cycle $c'$ before the number $h$.
Finally, we change the directions of the numbers $a_1, \dots, a_l$ to the counterclockwise boundary direction.   
%We will say that each of the numbers $a_1, \dots, a_l$ is sliding along the number $h$. 
Note that the single edge sliding is a special case of 
the package edge sliding.
An example of the package edge sliding is shown in \cref{fig:sliding-three-edge}.

\begin{figure}[H]
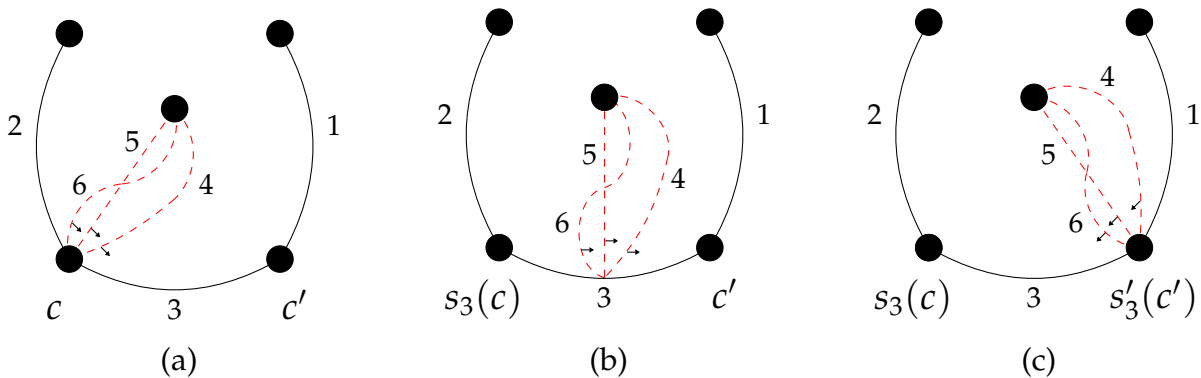

	\slidthree{1}
	\caption
	{
		An example of the package edge sliding for the numbers ${a_1=4}$, ${a_2=5}$, ${a_3=6}$ in the clockwise boundary direction 
		from the cycle ${c=(2 \; 3 \; 4 \; 5 \; 6)}$ to the cycle ${c'=(1 \; 3)}$ along the number ${h=3}$.
		\protect\subref{fig:threebefore} The initial graph $G$. Edges numbered $a_1, a_2, a_3$ are dashed and coloured red.
		\protect\subref{fig:threeduring} Visualization of the package edge sliding.
		\protect\subref{fig:threeafter} The resulting graph with two updated cycles: 
		${\es_3(c)= \left[ (2 \; 3 \; 4 \; 5 \; 6) \razy \left(3 \; 4\right) \razy \left(3 \; 5\right) \razy \left(3 \; 6\right) \right] \setminus \left[ \left(4\right) \left(5\right) \left(6\right) \right] 
		= \left(2 \; 3 \right)}$ and 
		${\ess_3(c')= (3 \; 6) \razy (3 \; 5) \razy (3 \; 4) \razy \left[ \left( 1 \; 3\right) \; \left( 4 \right)\right] = (1 \; 4 \; 5 \; 6 \; 3).}$
	}
	\label{fig:sliding-three-edge}
\end{figure}
Analogously, we define the package edge sliding for the numbers $a_1, \dots, a_l$ in \emph{the counterclockwise boundary direction} from the cycle $c$ to the cycle $c'$ along \emph{the number $h=c(a)$}
as the replacement of the cycles $c, c'$ in the multiset $M_G$ by two new cycles $s_l(c), s_l(c')$ obtained by recursion
with initial conditions $\es_0(c)=c, \ess_0(c')=c'$ and a recursive step given for each $0 < j \leq l$ by
\begin{align*}
	\es_{j}(c)&= \left[ \left(a_j \; h\right) \razy \es_{j-1}(c) \right] \setminus \left(a_j\right),\\ 
	\ess_{j}(c')&= \left[ \left( a_j \right) \; \ess_{j-1} (c') \right] \razy (a_j \; h).
\end{align*}

In other words, if we would like to slide a number $a$ in the clockwise boundary direction from a cycle $c$ along a number $h$, then $c^{-1}(a)=h$ or before that, a number $c^{-1}(a)$ must be slid in the same direction. (Similarly for the counterclockwise boundary direction and number c(a).)
For each number $a$ in a cycle $c$ with a fixed direction, the number $h$ along which it will be slid is uniquely determined. Therefore, for simplicity, we will say that each of the numbers $a_1, \dots, a_l$ from the cycle $c$ is slid in a fixed direction. 
Of course, we assume that at least one number in the cycle $c$ has no direction assigned. 

\subsection{Edge sliding in the general case}

We define \emph{the edge sliding} on a graph as follows. 
We start from the graph $G$ in which some ends of certain 
edges are assigned a clockwise or counterclockwise boundary direction.
We assume that: 
\begin{itemize}
	\item At each vertex at least one edge has no direction 
	assigned.
	\item  Two consecutive edges do not have conflicting directions, 
	i.e., there is no situation in which a number $a$ from a cycle $c$ 
	has assigned the counterclockwise boundary direction and the number $c(a)$ 
	has assigned the clockwise boundary direction.
\end{itemize} 
Any such set of directions can be 
decomposed into an package edge sliding system for sequences of numbers:
$$
\begin{cases} 
	a_{1, 1}, \dots, a_{1, l_{1}} \text{ in a direction } d_{1} \text{ from the cycle } c_{1} \text{ to the cycle } c'_{1} \text{ along the number } h_{1},\\
	\hfill  \vdots  \\	
	a_{t, 1}, \dots, a_{t, l_{t}} \text{ in a direction } d_{t} \text{ from the cycle } c_{t} \text{ to the cycle } c'_{t} \text{ along the number } h_{t}. 
\end{cases}
$$
Furthermore, we require that:
\begin{itemize}
	\item The numbers, along which other numbers are sliding, 
	are not themselves sliding, i.e.,
	$$\{ a_{1,1}, \dots, a_{t, l_t}\} \cap \{ h_1, \dots, h_t \} = \emptyset.$$
	\item Two ends of the same edge will not appear in the same vertex, i.e., if $a_{i_1, j_1}=a_{i_2, j_2}$, then $$\{ c_{i_1} , {c'}_{i_1} \} \cap \{ c_{i_2} , {c'}_{i_2} \} = \emptyset .$$
	(This condition can be weakened to ${c'}_{i_1} \neq {c'}_{i_2}$, but it is not necessary in this paper.)
	\item  On one side of the edge along which we slide, the numbers do not slide in opposite directions, i.e., 
	for each number $h_j$ belonging to the cycles $c \neq c'$, there is no situation in which a number $c(h_j)$
	has assigned the clockwise direction and the number ${c'}^{-1}(h_j)$ 
	has assigned the counterclockwise direction.
\end{itemize}
We will call the selection of directions that satisfy the above conditions as \emph{correct}. 

We define the edge sliding as applying sequentially the package edge sliding for all sequences in any order.
Such an action is well-defined, since permutation multiplication is associative and 
for any distinct numbers $a_1, a_2, h_1, h_2$ holds 
$$(a_1 \; h_1) \razy (a_2 \; h_2) = (a_2 \; h_2) \razy (a_1 \; h_1) .$$
An example of the edge sliding is shown in \cref{fig:sliding}.

\begin{figure}[H]
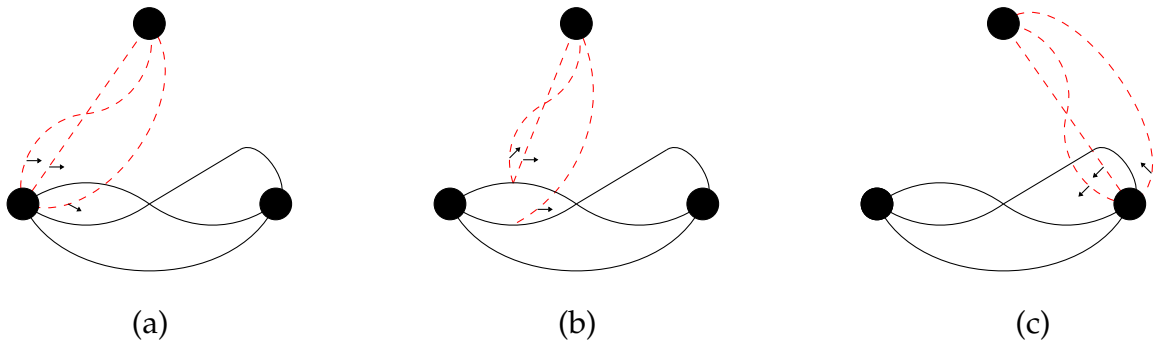

	\sliding{1.2}
	\caption
	{
		An example of edge sliding.
		\protect\subref{fig:rysprzed} 
		A graph with the slided 
		edges dashed and coloured red.
		The directions of the edge 
		ends are indicated by arrows.
		\protect\subref{fig:rysw} 
		The graph during the edge 
		sliding in the clockwise 
		boundary direction. 
		\protect\subref{fig:ryspo} 
		The graph after the edge 
		sliding. Directions have 
		already been reversed.
	}
	\label{fig:sliding}
\end{figure}
\pagebreak[3]

The edge sliding is an involution on the 
set of graphs drawn on an oriented and 
connected surface with a correct selection of directions.
Edge sliding is an invertible transformation, with the inverse 
also given by edge sliding. 

In addition, it is easy to see that the 
edge sliding on a graph does not change 
the number of faces of this graph.  

In the rest of this article, we will treat 
the edge sliding as transformation on a graph. 
\subsection{The set $X_k$ of maps} 

We consider any map from the set $X_k$. 
This map has one face 
and an odd number of edges $k \geq 5$. 
We denote the black vertices by 
$b_1, b_2$ and the white vertices by 
$w_1, w_2$. There is at least one edge
between each pair of the vertices of
different colours. Of course, 
$\degg(b_1)+\degg(b_2)=k$ is an odd 
number. Without loss of generality we 
may assume that $\degg(b_1)>0$ is an 
odd number and $\degg(b_2)>0$ is an 
even number. Let $k_1, k_2 > 0$
denote the numbers of edges which 
connect the vertex $b_1$ with the 
vertices $w_1, w_2$, respectively. 
As $\degg(b_1)=k_1+k_2$ is 
an odd number, without loss of  
generality we may assume that $k_1$ 
is even and $k_2$ is odd. For example, 
the unique (up to choice of the root) 
map from the set $X_5$ is shown in
\cref{fig:rysalla}.

\begin{figure}[H]
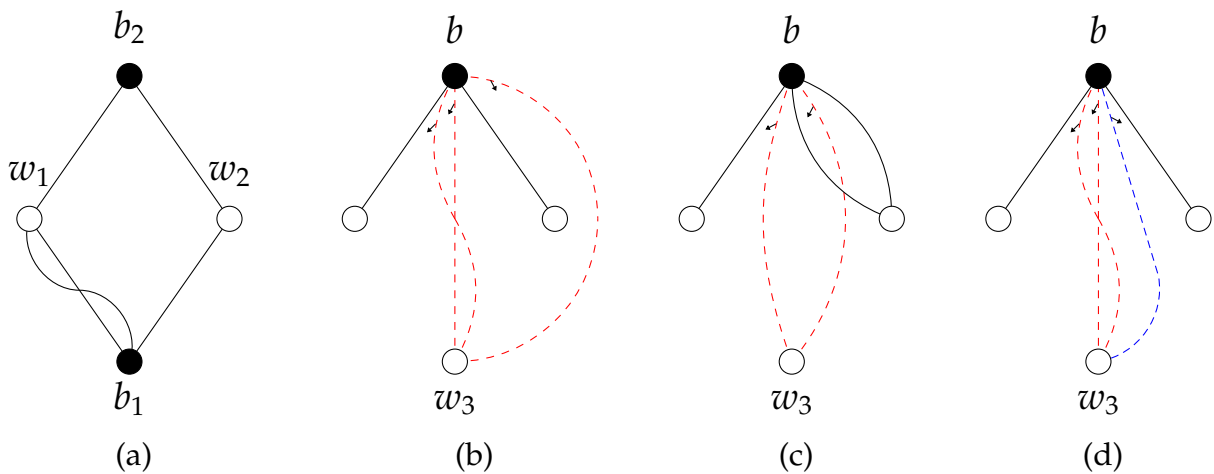

\rysall{0.95}
\caption
{
Examples of maps from the sets 
\protect\subref{fig:rysalla}  $X_5$,
\protect\subref{fig:rysallb}  $T_{5}^{\odd}$,
\protect\subref{fig:rysallc}  $T_{5}^{\even}$,
\protect\subref{fig:rysalld}  $T_{5}^{\rest}$.
Slided edges are dashed and coloured. 
The directions are indicated by arrows, and 
the root is not marked.
}
\label{fig:rysrest}
\end{figure}

\subsection{The set $Y_k$ of maps}

Let $\sigma$ be the cycle that encodes 
the clockwise boundary cyclic order of the 
corners on the unique face of $G$. 
%(this is just $\sigma_{\res}$ for $G^{\res}$ with no special edges) 
We will say 
that \emph{the vertex $w_j$ is a
descendant of the vertex $w_i$} 
(we denote it by $w_i \rightarrow w_j$) 
if using the clockwise boundary order of 
the corners on the unique face of the 
map we can move (by walking along the 
edges and holding them with the right 
hand) in two steps from a certain corner 
$c_i$ of the vertex $w_i$ to a certain
corner $c_j$ of the vertex $w_j$, i.e., 
$\nast^2(c_i)=c_j$. 

We consider any map from the set $Y_k$. 
Any such map has one face and an odd 
number of edges $k \geq 5$. We denote 
the black vertex by $b$ and  the white
vertices by $w_1, w_2, w_3$. We will 
write the set $Y_k$ as a union of three 
sets which will be defined below.

Let $Y_{k}^{\odd}\subseteq Y_k$ be the 
set of maps for which there exists an
\emph{odd degree} white vertex (let us say it 
is $w_3$) which has the other two white
vertices as descendants, i.e., 
$w_3\rightarrow w_1$ and 
$w_3\rightarrow w_2$. 
Let $T_k^{\odd}$ be the set of all 
maps from the set $Y_k^{\odd}$ with a 
distinguished vertex $w_3$ with this 
property. 
Moreover, each edge between the vertex $w_3$ and the vertex $b$, has a clockwise boundary direction assigned at the vertex $b$.
The unique (up to choice of the root) 
map from the set $T_{5}^{\odd}$ is 
shown in \cref{fig:rysallb}. Clearly
\begin{equation}
\label{ineqodd}
|T_{k}^{\odd}| \geq |Y_{k}^{\odd}|.
\end{equation}

Let $Y_{k}^{\even}\subseteq Y_k$ be 
the set of maps such that there
exists an \emph{even degree} white vertex 
(let us say it is $w_3$) which has 
the other two white vertices as 
descendants, i.e., 
$w_3\rightarrow w_1$ and 
$w_3\rightarrow w_2$. 
Let $T_k^{\even}$ be the set of all  
the maps from the set $Y_k^{\even}$ 
with a distinguished vertex $w_3$ with 
this property. 
Moreover, each edge between the vertex $w_3$ and the vertex $b$, has a clockwise boundary direction assigned at the vertex $b$.
The unique (up to choice of 
the root) map from the set $T_{5}^{\even}$ 
is shown in \cref{fig:rysallc}. Clearly
\begin{align}
\label{ineqeven}
|T_{k}^{\even}| \geq |Y_{k}^{\even}|.
\end{align}

Let $Y_{k}^{\rest}\subseteq Y_k$ be 
the set of maps not included in the
sets $Y_{k}^{\odd}$ and 
$Y_{k}^{\even}$, i.e.,
\begin{align}
\label{yrestdef}
Y_{k}^{\rest}=Y_k 
\setminus (Y_{k}^{\odd} 
\cup Y_{k}^{\even}).
\end{align}

Consider some map $m\in Y_{k}^{\rest}$. 
Obviously $w_1 \rightarrow w_2 
\rightarrow w_3 \rightarrow w_1$ or the 
other way around. Without  loss  of  
generality  we  may  assume  that 
$w_1 \rightarrow w_2 \rightarrow w_3 
\rightarrow w_1$ and as a consequence 
$w_1 \nleftarrow w_2 \nleftarrow w_3 
\nleftarrow w_1$.

\begin{lemma}
The map $m$ has a white vertex of odd 
degree, greater than $1$. 
\end{lemma}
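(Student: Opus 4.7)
The plan is to argue by contradiction. Suppose that every odd-degree white vertex of $m$ has degree exactly $1$. Since $k = d_1 + d_2 + d_3$ is odd, the number of indices $i$ with $d_i$ odd is $1$ or $3$. In the case of three odd degrees, each $d_i$ would equal $1$ and so $k = 3$, contradicting $k \geq 5$. Hence exactly one $d_j$ is odd and equals $1$, while $d_{j+1}$ and $d_{j+2}$ are positive even integers (indices taken cyclically in $\{1,2,3\}$).

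The next step is to translate the rest condition into combinatorial information about the cyclic order of the edges around the black vertex $b$. A descendant $w_i \to w_{i'}$ holds in $m$ precisely when some pair of cyclically consecutive edges at $b$ goes to $w_i$ and $w_{i'}$ (in that cyclic order). The rest condition forbids all consecutive pairs except $(w_i, w_i)$ and $(w_i, w_{i+1})$, so the cyclic sequence of white endpoints read off around $b$ decomposes into runs arranged in the cyclic pattern $w_1, w_2, w_3$. A flow-conservation argument shows that the number $t$ of runs of each colour is the same, and the bound $t \leq d_j = 1$ forces $t = 1$. Hence the cyclic order at $b$ consists of exactly one block of length $d_\ell$ for each $\ell$, arranged in the cyclic pattern $w_1, w_2, w_3$.

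Finally I would use the one-face condition to derive a parity contradiction. Labelling the edges at $b$ by their cyclic positions $e_1, \ldots, e_k$, a direct computation shows that $\sigma^2$, viewed as a permutation $\pi$ on the black corners, has the form $\pi(i) = \rho(i) + 1 \pmod{k}$, where $\rho$ acts on each of the three index blocks as the corresponding vertex rotation. Since $m$ has a single face, $\sigma$ is a single $2k$-cycle, and hence $\pi$ must be a single $k$-cycle. A straightforward orbit argument then forces the induced permutation $\pi_B^{\circ} = T_B \circ \rho_{w_{j+1}}$ on the block $B$ of $w_{j+1}$-edge indices to be a single $d_{j+1}$-cycle, where $T_B$ denotes the cyclic shift within $B$. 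But both $T_B$ and $\rho_{w_{j+1}}$ are $d_{j+1}$-cycles, so their composition is an even permutation, whereas any $d_{j+1}$-cycle is odd when $d_{j+1}$ is even. This contradicts our reduction.

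The main obstacle is step two, the translation of the purely descendant-theoretic rest condition into the explicit cyclic-run shape of the cyclic order at $b$; once this structural fact is in hand, the remainder is a routine parity computation.
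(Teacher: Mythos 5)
Your proof is correct, and while it follows the paper's overall skeleton, its concluding step is genuinely different. The opening reduction (exactly one white vertex is a leaf, the other two have positive even degree) and the structural core — that the cyclic order of edges around $b$ consists of a single run to each white vertex, arranged cyclically as $w_j, w_{j+1}, w_{j+2}$ — are the same as in the paper; the paper derives this block structure by labelling the white corners and tracking the single cycle of $\sigma^2$ on them, whereas you read it off from the consecutive edge-ends at $b$, but these are dual formulations of one computation, and your run-counting argument ($t$ runs of each colour, $1 \le t \le d_j = 1$) is if anything cleaner. Where you diverge is the final parity contradiction: the paper replaces the whole bundle of edges at the even-degree vertex $w_{j+1}$ by a single edge, asserts (via a picture) that the result is still a one-face map with four vertices, and contradicts the Euler formula $2-2g=V-k+1$; you instead write $\sigma^2$ on the black corners as a shift composed with the vertex rotations, observe that its first-return map to the block $B$ of $w_{j+1}$-edges must be a single $d_{j+1}$-cycle, compute that this first-return map equals $T_B\circ\rho_{w_{j+1}}$ (the leaf and the single-block structure are both needed here, and I verified the computation), and obtain a sign contradiction since a product of two $d_{j+1}$-cycles is even while a $d_{j+1}$-cycle is odd for $d_{j+1}$ even. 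Your version trades the paper's topological surgery (whose preservation of the one-face property is left to a figure) for an explicit permutation-sign computation; both are ultimately the same parity obstruction, but yours is the more self-contained of the two.
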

\begin{proof}
By contradiction,  suppose this is not 
the case. The map $m$ has at least one 
odd degree white vertex, because 
$\degg(w_1)+\degg(w_2)+\degg(w_3)=k$ is 
odd. Without loss of generality we may 
assume that $\degg(w_1)$ is odd. Since 
$$\degg(w_1)+\degg(w_2)+\degg(w_3)=k>3=
1+1+1,$$ it follows that $\degg(w_1)=1$ 
and $\degg(w_2),\degg(w_3)$ are even, 
because $m$ does not have a white vertex 
with odd degree greater than $1$. The 
vertex $w_1$ is a leaf and thus has a 
unique corner which we denote by $c_1$.

Naturally $\nast^2(c_1)$ is a corner of 
the vertex $w_2$. Note that $\nast^2$ 
is a permutation of the corners of the 
white vertices which has only one cycle, 
because the map $m$ has only one face. 
The corners of the white vertices 
can be labelled 1, 2, 3 according to 
the names of the vertices 
they are in. If a corner $c$ has the 
label $a$, its descendant $\nast^2(c)$ 
has either the label $a$ or 
$1+a \operatorname{mod} 3$. 
There is only one corner which has the 
label $1$, so the corner labels 
(arranged in the cyclic order according 
to the unique cycle of $\nast^2$) are 
$(1, 2, \ldots, 2, 3, \ldots, 3)$. 
\begin{figure}[H]
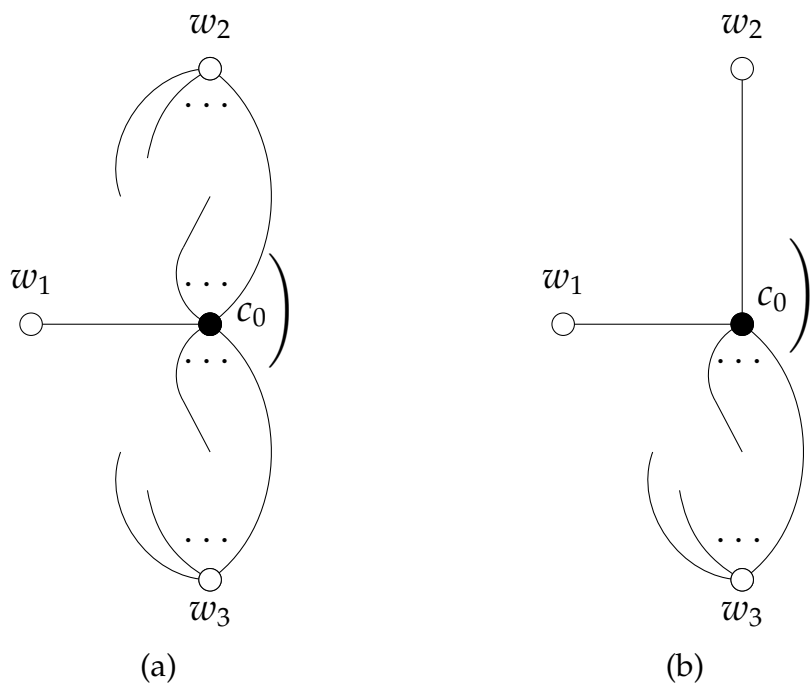

	\ryschange{0.85}
	\caption
	{ 
		\protect\subref{fig:ryschangea} The map $m$ before replacement including the marked corner $c_0$. \linebreak[4]
		\protect\subref{fig:ryschangeb} A non-existent hypothetical map with four vertices and an even number of edges. 
	}
	\label{fig:ryschange}
\end{figure}
Since 
there exists only one corner $c_2$ of the 
white vertex $ w_2 $ such that 
$\nast^2(c_2)$ is a corner of the vertex 
$w_3$, then there exists a unique corner 
$c_0=\nast(c_2)$ of the vertex $b$ such that 
$\nast(c_0)$ is the corner of the vertex $w_3$ 
and $\nast^{-1}(c_0)$ is the corner of the 
vertex $w_2$. Thus the clockwise angular cyclic 
order of the edges around the black vertex 
$b$ is as follows: one edge connected to 
the vertex $w_1$, a certain number of edges 
connected to the vertex $w_2$, a certain 
number of edges connected to the vertex $w_3$.
\cref{fig:ryschangea} visualizes this situation.

It is easy to see that we can replace all 
edges of the white vertex $w_2$ by a single 
edge (see \cref{fig:ryschange}) and get a map 
with $4$ vertices, one 
face and an even number of edges. We get a 
contradiction, because such a map does not 
exist (see \cref{euler}). Therefore, the
map $m$ has a white vertex with an odd 
degree greater than $1$.
\end{proof}

\medskip

Now, we fix the directions. 
To all ends in the vertex $b$ of the edges between the vertex $w_3$ and $b$, we assign the direction in such way that among 
them there is an even number with the clockwise boundary direction 
and an odd number with the counterclockwise boundary direction.
This is always possible, e.g. for a single edge 
with the counterclockwise boundary direction.

Let $T_k^{\rest}$ be the set of all the 
maps from the set $Y_k^{\rest}$ with a 
distinguished white vertex denoted by $w_3$ 
with a fixed choice of the set of special 
edges together with the directions of their 
ends satisfying the conditions just mentioned above. 
The unique (up to choice of the root) example 
of the map from the set $T_{5}^{\rest}$ is 
shown in \cref{fig:rysalld}. Clearly
\begin{align}
\label{ineqrest}
|T_{k}^{\rest}| \geq |Y_{k}^{\rest}|.
\end{align}

\medskip

\section{Proof of main result}

\subsection{Goal}
In this section we will construct three bijections 
which show the cardinalities of the corresponding sets 
are equal. Using these equalities and the definitions 
of these sets we will prove \cref{mainthm2}.

\subsection{Three bijections}

\paragraph{The first bijection between $X_k$ and $T_k^{\odd}$.}
We start from a map $m \in X_{k}$. Recall that we have assumed that
$\degg(b_1)$ is odd. 
All edges connecting a vertex $b_1$ to white vertices, 
have the counterclockwise boundary direction assigned at the vertices $w_1, w_2$.
The choice of directions is correct because $m$ is a bipartite graph.
We apply the 
edge sliding to the map $m$. Then we change 
the colour of the black vertex $b_1$ to white 
and its name to $w_3$, and the name of the vertex
$b_2$ to $b$. Of course, the degree of the vertex
$w_3$ does not change and is odd. In addition, 
$w_3\rightarrow w_1$ and $w_3\rightarrow w_2$,
because any map from the set $X_{k}$ has at 
least one edge between each pair of the vertices 
of different colours. We obtain a map from the 
set $T_{k}^{\odd}$. (At all times one of the edges 
is selected as the root.) Moreover, each map from the 
set $T_{k}^{\odd}$ can be produced in this way. Such a
transformation is a bijection between the set 
$X_k$ and the set $T_k^{\odd}$, 
since the edge sliding is reversible.  
\cref{fig:rysodd} shows an example of this
bijection for $k=5$. Thus
\begin{align}
\label{eqodd}
|X_k|=|T_k^{\odd}|.
\end{align}
\begin{figure}[H]
\rysodd{0.95}
\caption
{
The example of the first bijection 
for the $5$-edged map.
	\protect\subref{fig:rysodda} 
		The map from the set $X_{5}$.
	\protect\subref{fig:rysoddb} 
		The map during the edge sliding.
	\protect\subref{fig:rysoddc} 
		The map after the edge sliding.
	\protect\subref{fig:rysoddd} 
		The map from the set $Y_{5}^{\odd}$.
}
\label{fig:rysodd}
\end{figure}

\paragraph{The second bijection between $X_k$ and $T_k^{\even}$.}
We start from a map $m \in X_{k}$. Recall that we have assumed that 
$\degg(b_2)$ is even. 
All edges connecting a vertex $b_1$ to white vertices, 
have the counterclockwise boundary direction assigned at the vertices $w_1, w_2$.
The choice of directions is correct because $m$ is a bipartite graph.
We apply the 
edge sliding to the map $m$. Then we change 
the colour of the black vertex $b_2$ to white 
and its name to $w_3$, and the name of the vertex
$b_1$ to $b$. Of course, the degree of the vertex
$w_3$ does not change and is even. In addition, 
$w_3\rightarrow w_1$ and $w_3\rightarrow w_2$,
because any map from the set $X_{k}$ has at 
least one edge between each pair of the vertices 
of different colours. We obtain a map from the 
set $T_{k}^{\even}$. (At all times one of the edges 
is selected as the root.) Moreover, each map from the 
set $T_{k}^{\even}$ can be produced. Such a 
transformation is a bijection between the set 
$X_k$ and the set $T_k^{\even}$,
since the edge sliding is reversible. 
\cref{fig:ryseven} shows an example of this
bijection for $k=5$. Thus
\begin{align}
\label{eqeven}
|X_k|=|T_k^{\even}|.
\end{align}
\begin{figure}[H]
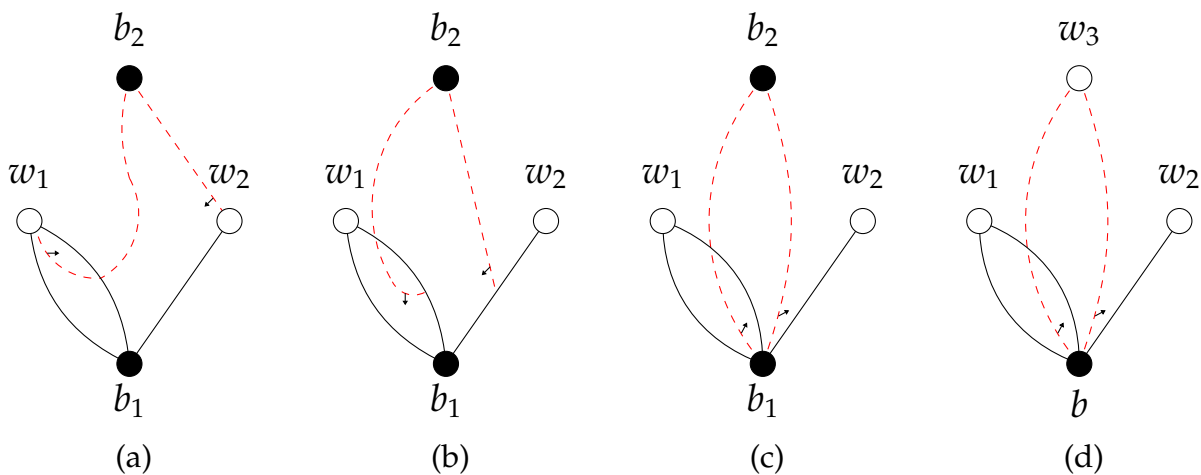

\ryseven{0.95}
\caption
{
\protect\subref{fig:rysevena}---\protect\subref{fig:rysevend} 
The example of the second bijection for the $5$-edged map. 
}
\label{fig:ryseven}
\end{figure}

\paragraph{The third bijection.}
We start from a map $m \in X_{k}$. Recall that we have assumed that
$\degg(b_1)$ is odd.
All edges connecting a vertex $b_1$ to white vertices, 
have the counterclockwise boundary direction assigned 
at the vertex $w_1$ and the clockwise boundary direction assigned at the vertex $w_2$.
The choice of directions is correct because $m$ is a bipartite graph.
We apply the 
edge sliding to the map $m$. Then we change 
the colour of the black vertex $b_1$ to white 
and its name to $w_3$, and the name of the vertex
$b_2$ to $b$. Of course, the degree of the vertex
$w_3$ does not change and is odd. In addition, 
$w_3\rightarrow w_1$ and $w_2\rightarrow w_3$ 
(and $w_1\rightarrow w_2$),
because any map from the set $X_{k}$ has at 
least one edge between each pair of the vertices 
of different colours. We do not necessarily obtain 
a map from the set $T_{k}^{\rest}$ (it may be that 
we obtain a map from set $T_{k}^{\odd}$), but it can 
be seen that each map from the set $T_{k}^{\rest}$ 
can be produced. (At all times one of the edges 
is selected as the root.)
Such a transformation is a bijection between the 
set $X_k$ and some superset of the set $T_k^{\odd}$,
since the edge sliding is reversible. 
\cref{fig:rysrest} shows an example of this
bijection for $k=5$. Thus
\begin{align}
\label{eqrest}
|X_k| \geq |T_k^{\rest}|.
\end{align}
\begin{figure}[H]
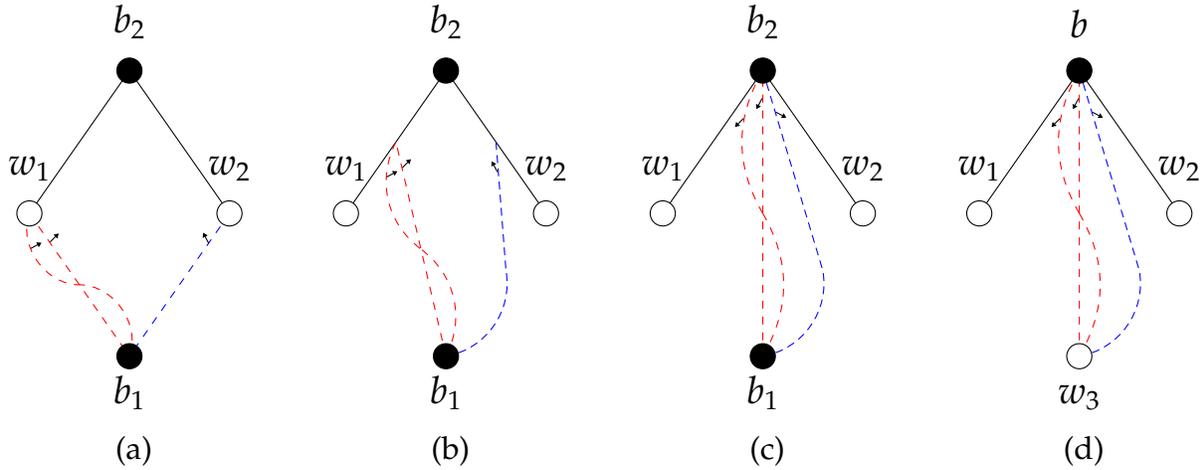

\rysrest{0.95}
\caption
{
\protect\subref{fig:rysresta} - \protect\subref{fig:rysrestd} 
The example of the third bijection for the $5$-edged map.
}
\label{fig:rysrest}
\end{figure}

\subsection{The conclusion of the proof}
\begin{proof}[\unskip\nopunct]
We can now proceed to the proof of \cref{mainthm2}, we
have: 
\begin{align*}
3[C_2^2]L_k & =3[R_2^2]K_k-[R_4]K_k \tag*{by \eqref{formula}}\\ 
& =3|X_k|-|Y_k| \tag*{by \eqref{xdef}, \eqref{ydef}}\\ 
& \geq |T_k^{\odd}|+|T_k^{\even}|+|T_k^{\rest}|-|Y_k| \tag*{by \eqref{eqodd},
\eqref{eqeven}, \eqref{eqrest}}\\ & \geq |Y_k^{\odd}|+|Y_k^{\even}|+|
Y_k^{\rest}|-|Y_k| \tag*{by \eqref{ineqodd}, \eqref{ineqeven}, \eqref{ineqrest}}\\
& =|Y_k^{\odd} \cap Y_k^{\even}| \tag*{by \eqref{yrestdef}}\\ 
& \geq 0.
\end{align*}
\end{proof}

\section*{Acknowledgements}

I thank my supervisor, Piotr
Śniady, for his support. I thank also 
Doctor Stephen Moore for the help with the 
text editing.

\printbibliography

\end{document}